\UseAllTwocells \xyoption{frame} \CompileMatrices
\newtheorem{prop}{Proposition}
\newtheorem{thm}[prop]{Theorem}
\newtheorem{lemma}{Lemma}
\theoremstyle{definition}
\theoremstyle{remark}
\theoremstyle{remark}
\newcommand{\Mbar}{\overline{\M}}
\newcommand{\proj}{\mathbb{P}}
\newcommand{\com}{\mathbb{C}}
\newcommand{\Z}{\mathcal{Z}}
\newcommand{\M}{\mathcal{M}}
\newcommand{\C}{\mathcal{C}}
\newcommand{\A}{\mathcal{A}}
\newcommand{\B}{\mathcal{B}}
\newcommand{\D}{\mathcal{D}}
\newcommand{\sO}{\mathcal{O}}
\newcommand{\bE}{\mathbb{E}}
\def\<{\left\langle}
\def\>{\right\rangle}
\begin{document}

\title{Abelian Hurwitz-Hodge integrals}
\author{P. Johnson}
\address{Department of Mathematics\\ University of Michigan\\
Ann Arbor, MI\\USA}
\email{pdjohnso@umich.edu}

\author{R. Pandharipande}
\address{Department of Mathematics\\Princeton University\\ Princeton, NJ\\USA}
\email{rahulp@math.princeton.edu}

\author{H.-H. Tseng}
\address{Department of Mathematics\\ University of Wisconsin-Madison\\ Madison\\ WI \\ USA}
\email{tseng@math.wisc.edu}

\date{\today}

\begin{abstract}
Hodge classes on the moduli space of admissible covers
with monodromy group $G$ are associated to irreducible
representations of $G$.
We evaluate all
linear Hodge integrals over moduli spaces of admissible covers with
abelian monodromy in terms of multiplication
in an associated wreath
group algebra. In case $G$ is cyclic and the representation
is faithful, the evaluation is in terms of double Hurwitz numbers.
In case $G$ is trivial, the formula specializes
to the well-known result of Ekedahl-Lando-Shapiro-Vainshtein for
linear Hodge integrals over the moduli space of curves in terms of
single Hurwitz numbers.
\end{abstract}

\maketitle

\tableofcontents

\setcounter{section}{-1}
\section{Introduction}
\subsection{Moduli of covers}
Let $\M_{g,n}$ be the moduli space of nonsingular, connected,
 genus $g$ curves over
$\mathbb{C}$
with
$n$ distinct points.
Let $G$ be a finite group.
Given an element $[C,p_1,\ldots,p_n] \in \M_{g,n}$, we will consider principal
$G$-bundles,
\begin{equation} \label{cccq}
\begin{CD}
G @>>> P \\
@. @VV\pi V \\
@. C\setminus\{p_1,\ldots,p_n\}\ \ ,
\end{CD}
\end{equation}
over the punctured curve. Denote the $G$-action on the fibers of
$\pi$ by
$$\tau: G \times P \rightarrow P.$$
The monodromy defined by a positively
oriented loop around the $i^{th}$ puncture
determines a conjugacy class $\gamma_i \in \text{Conj}(G)$.
Let $\mathbf{\gamma}=(\gamma_1,\ldots,\gamma_n)$
be the $n$-tuple
of monodromies.
The moduli space of covers $\A_{g,\gamma}(G)$ parameterizes
$G$-bundles \eqref{cccq} with the prescribed monodromy conditions.
There is a canonical morphism
$$\epsilon: \A_{g,\gamma}(G) \rightarrow \M_{g,n}$$
obtained from the base of the $G$-bundle. Both
$\A_{g,\gamma}(G)$ and $\M_{g,n}$ are nonsingular
Deligne-Mumford stacks.

A compactification $\A_{g,\gamma}(G)\subset \overline{\A}_{g,\gamma}(G)$
by {\em admissible
covers} was introduced by Harris and Mumford in \cite{HM}.
An admissible cover
$$[\pi,\tau]\in \overline{\A}_{g,\gamma}(G)$$
is a  degree $|G|$ finite  map of complete curves
$$ \pi: D \rightarrow (C,p_1,\ldots,p_n)$$
together with a $G$-action
$$ \tau: G \times D \rightarrow D$$
on the fibers of $\pi$
satisfying the following properties:
\begin{enumerate}
\item[(i)] $D$ is a possibly disconnected nodal curve,
\item[(ii)]$[C,p_1,\ldots,p_n]\in \overline{\M}_{g,n}$
is a stable curve,
\item[(iii)] $\pi$ maps the nonsingular points to nonsingular
points and nodes to nodes,
$$\pi(D^{ns}) \subset C^{ns}, \ \ \ \pi(D^{sing}) \subset C^{sing},$$

\item[(iv)] $[\pi,\tau]$ restricts to a principal $G$-bundle
 over the punctured nonsingular
locus
\begin{equation*}
\pi^{open}: D^{open} \rightarrow C^{ns}\setminus \{p_1,\ldots,p_n\}
\end{equation*}
with monodromy $\gamma$,
\item[(v)] distinct branches of  a node $\eta\in D^{sing}$
map to distinct branches
of $\pi(\eta)\in C^{sing}$
 with equal ramification orders over $\pi(\eta)$,

\item[(vi)]
the monodromies of the
$G$-bundle $\pi^{open}$ determined by the two branches of $C$
at $\eta \in C^{sing}$
lie in opposite conjugacy classes.
\end{enumerate}
Harris and Mumford originally considered only symmetric group $\Sigma_d$
monodromy, but the natural setting for the construction is
for all finite $G$.

An admissible cover may be alternatively viewed as a
principal $G$-bundle over the stack quotient{\footnote{$[D/G]$
differs from $C$ only by possible stack structure at the markings $p_i$
and the nodes. In both cases, the order of the isotropy group is
the order of the local monodromy in $G$.}}
 $[D/G]$ inducing
a stable map to the classifying space
\begin{equation}\label{cwwq}
f:[D/G] \rightarrow \B G.
\end{equation}
Then,
$\overline{\A}_{g,\gamma}(G)$ is simply
a moduli space of
stable maps \cite{AV,CR} {\footnote{We do not trivialize the marked
gerbes on the domain in the definition of $\overline{\M}_{g,\gamma}(\B G)
$.}},
$$\overline{\A}_{g,\gamma}(G) \stackrel{\sim}{=}
\overline{\M}_{g,\gamma}(\B G).$$ The deformation theory of
stable maps endows
$\overline{\A}_{g,\gamma}(G)$ with a canonical nonsingular
Deligne-Mumford stack structure. We take the stable maps
perspective here.

There are two flavors of such stable map theories. If the base
$C$ is required to be connected as above, we write
$\overline{\M}^\circ_{g,\gamma}(\B G)$. If disconnected bases
$C$ are allowed, we write $\overline{\M}^\bullet_{g,\gamma}(\B G)$.
In the disconnected case, the genus $g$ may be negative.
If the superscript is omitted, the connected case is
assumed.

Our results are restricted to abelian groups $G$.
Here, $\text{Conj(G)}$ is the set of elements of $G$.
Of
course, the cyclic groups $\mathbb{Z}_a$ will play the most important
role.
In case $G$ is trivial, there is no extra monodromy data, and
the moduli space of maps $\overline{\M}_{g,(0,\ldots,0)}(\B \mathbb{Z}_1)$
specializes to $\overline{\M}_{g,n}$.

\subsection{Hodge integrals}
Let $R$ be an irreducible $\com$-representation of $G$.
If $G$ is abelian, $R$ is a character
$$\phi^R: G \rightarrow \com^*.$$
By associating to each map
$[f]\in \overline{\M}_{g,\gamma}(G)$
presented as \eqref{cwwq} above
the $R$-summand of the $G$-representation $H^0(D,\omega_D)$,
we obtain a vector bundle
$$\mathbb{E}^R \rightarrow \overline{\M}_{g,\gamma}(\B G)\ .$$
The rank of $\mathbb{E}^R$ is locally constant and
determined by the orbifold Riemann-Roch formula
discussed in Section 1.
The {\em Hodge classes} on $\overline{\M}_{g,\gamma}(\B G)$ are
Chern classes of $\mathbb{E}^R$,
$$\lambda_i^R = c_i(\mathbb{E}^R) \in H^{2i}(\overline{\M}_{g,\gamma}(\B G),
\mathbb{Q}).$$

The $i^{th}$ cotangent line bundle $L_i$ on the moduli space
of curves has fiber
$$L_i|_{(C,p_1,\ldots,p_n)} = T^*_{p_i}(C).$$
Descendent classes on $\overline{\M}_{g,n}$ are defined by
$$\psi_i =c_1(L_i)  \in H^2(\overline{\M}_{g,n},\mathbb{Q}).$$
Descendent classes $\bar{\psi}_i$
on the space of stable maps are defined
by pull-back via the morphism
$$\epsilon: \overline{\M}_{g,\gamma}(\B G ) \rightarrow \overline{\M}_{g,n}$$
to the moduli space of curves,
$$\bar{\psi}_i = \epsilon^*(\psi_i) \in H^2(\overline{\M}_{g,\gamma}(\B G),
\mathbb{Q}).$$

The {\em Hodge integrals} over $\overline{\M}_{g,\gamma}(\B G)$
are the top intersection  products of the
classes
$\{\lambda_i^R\}_{R\in Irr(G)}$
and
$\{\bar{\psi}_j\}_{1\leq j \leq n}.$
Linear Hodge integrals
are of the form
$$\int_{\overline{\M}_{g,\gamma}(\B G)} \lambda_i^R \cdot \prod_{j=1}^n
\bar{\psi}_j^{m_j}.$$
The term {\em Hurwitz-Hodge integral} was used in \cite{BGP}
to emphasize the role of the covering spaces.

\subsection{Hurwitz numbers}
Let $g$ be a genus and let $\nu$ and $\mu$ be two (unordered) partitions
of $d\geq 1$.
Let $\ell(\nu)$ and $\ell(\mu)$ denote the lengths
of the respective partitions.
A Hurwitz cover of $\proj^1$ of
 genus $g$  with ramifications $\nu$
and $\mu$ over $0,\infty\in \proj^1$ is
a morphism
$$\pi: C \rightarrow \proj^1$$
satisfying the following properties:
\begin{enumerate}
\item[(i)] $C$ is a nonsingular, connected,
 genus $g$ curve,
\item[(ii)] the divisors $\pi^{-1} (0),\pi^{-1}(\infty)\subset C$ have profiles equal
            to the partitions $\nu$ and $\mu$ respectively,
\item[(iii)] the map $\pi$ is simply ramified over
$\mathbb{C}^*=\proj^1 \setminus\{0,
\infty\}$.
\end{enumerate}
By condition (ii), the degree of $\pi$ must be $d$.
Two covers
$$\pi: C \rightarrow \proj^1, \ \pi': C' \rightarrow \proj^1$$ are isomorphic if
there exists an isomorphism of curves $\phi: C \rightarrow C'$ satisfying
$\pi'\circ \phi= \pi$. Each  cover $\pi$ has an naturally
associated automorphism
group $\text{Aut}(\pi)$.

By the Riemann-Hurwitz formula,
the number of simple ramification points of $\pi$ over  $\mathbb{C}^*$
is $$r_g(\nu,\mu)=2g-2+\ell(\nu)+\ell(\mu).$$
Let $U_r\subset \mathbb{C}^*$ be a fixed set of $r_g(\nu,\mu)$
distinct points. The set of $r_g(\nu,\mu)^{th}$ roots of unity is the
standard choice.
The {\em double Hurwitz number}
$H_{g}(\nu,\mu)$ is a weighted count of the distinct
Hurwitz covers $\pi$ of genus $g$ with ramifications
$\nu$  and $\mu$ over $0,\infty\in \proj^1$
and simple ramification over $U_r$.
Each such cover is weighted by $1/|\text{Aut}(\pi)|$.
The count $H_g(\nu,\mu)$ does not depend upon the location of
the points of $U_r$.

There are two flavors of Hurwitz numbers. The connected case defined
above will be denoted $H^\circ_g(\nu,\mu)$. If $C$ is allowed to
be disconnected, the Hurwitz count is denoted $H^\bullet_g(\nu,\mu)$.
Again, the absence of a superscript indicates the connected theory.

Disconnected Hurwitz numbers are easily
 expressed as products in the center $\Z \Sigma_d$ of
 the group algebra of $\Sigma_d$,
\begin{equation} \label{hursym}
H^\bullet_g(\nu, \mu)=\frac{1}{d!}\big(
C_\nu T^{r_g(\nu, \mu)} C_\mu\big)_{[\text{Id}]}\ .
\end{equation}
Here, $C_\nu$ and $C_\mu$ are the sums in the group
algebra of all elements of $\Sigma_d$
with cycle types 
$\nu$ and $\mu$ respectively, and
$T$ is the sum of all transpositions.
The subscript denotes the coefficient of the identity
$[\text{Id}]$. 

Multiplication in $\Z\Sigma_d$ is diagonalized by the representation basis.
Hurwitz numbers can be written 
as sums over characters of $\Sigma_d$ and conveniently expressed
as matrix elements in the infinite wedge representation. The latter
formalism naturally connects Hurwitz numbers to integrable systems
\cite{Ok,OP1,P}.

\subsection{Formula for $\mathbb{Z}_a$}
The formula
for linear Hodge integrals is simplest in case the monodromy group is
$\mathbb{Z}_a$ and the representation $U$ is given by
$$\phi^U: \mathbb{Z}_a \rightarrow \mathbb{C}^*, \ \ \ \phi^U(1)= e^{\frac{2\pi i}{a}}.$$

Let $\gamma=(\gamma_1,\ldots, \gamma_n)$ be a vector{\footnote{
The length $n$ may be
taken to be 0 in which case $\gamma=\emptyset$.}} of {\em nontrivial}
elements of $\mathbb{Z}_a$,
$$\gamma_i \in \{ 1,\ldots, a-1\}.$$
Let $\mu$ be a partition of  $d\geq 1$ with parts $\mu_j$ and
length $\ell$,
$$\sum_{j=1}^\ell \mu_j = d.$$
Let $\gamma-\mu$ denote the vector of elements of $\mathbb{Z}_a$ defined by
$$\gamma-\mu = (\gamma_1,\ldots,\gamma_n, -\mu_1, \ldots, -\mu_\ell).$$
While the parts of $\mu$ are unordered, an ordering is chosen for
$\gamma-\mu$.
The vector $\gamma-\mu$ may contain trivial parts.
We will consider Hodge integrals over the moduli space
$\overline{\M}_{g,\gamma-\mu}(\B \mathbb{Z}_a)$.

For nonemptiness, the parity
condition
\begin{equation}\label{parityc}
d-\sum_{i=1}^n \gamma_i = 0  \mod a
\end{equation}
is required.
non-negativity,
$$
 d-\sum_{i=1}^n \gamma_i \geq 0,$$
and boundedness,
$$\forall i\neq j, \ \ \gamma_i+\gamma_j \leq a$$
will also be imposed.
If $\gamma=\emptyset$, non-negativity and boundedness are satisfied.

An automorphism of a partition is an element of the permutation group
preserving equal parts. Let $|\text{Aut}(\gamma)|$ and $|\text{Aut}(\mu)|$
denote the orders of the automorphism groups.{\footnote{Here, $\gamma$ is considered
as a partition by forgetting the ordering of the elements.}}
Let $\gamma_+$ be the partition of $d$ determined by
adjoining $\frac{d-\sum_{i=1}^n \gamma_i}{a}$ parts of size $a$,
$$\gamma_+= (\gamma_1,\ldots, \gamma_n, {a, \ldots,a}).$$
A calculation shows
$$ r_g(\gamma_+,\mu) = 2g-2 + n+
\ell +
\frac{d}{a} - \sum_{i=1}^n  \frac{\gamma_i}
{a}.$$

Let the monodromy group $\mathbb{Z}_a$ and representation $\phi^U$
be specified as above.
Our main result for linear $\mathbb{Z}_a$-Hodge integrals is the
following  formula.

\begin{thm} Let $\gamma=(\gamma_1,\ldots,\gamma_n)$ \label{vvv}
be nontrivial monodromies in $\mathbb{Z}_a$ satisfying
the parity, non-negativity, and boundedness conditions with
respect to the partition $\mu$. Then,
\begin{multline*}
H_g(\gamma_+,\mu) =\\
\frac{r_g(\gamma_+,\mu)!}{|\text{\em Aut}(\gamma)|\
|\text{\em Aut}(\mu)|} a^{1-g-
\sum_{i=1}^n \frac{\gamma_i}{a}+ \sum_{j=1}^\ell \< \frac{\mu_j}{a} \>}
\prod_{j=1}^\ell\frac{\mu_j^
{\left\lfloor\frac{\mu_j}{a}
\right\rfloor}}{\left\lfloor\frac{\mu_j}{a}\right\rfloor!}
\int_{\overline{\M}_{g,\gamma-\mu}(\B \mathbb{Z}_a)}
\frac{\sum_{i =0}^\infty
(-a)^i \lambda_i^{U}}{\prod_{j=1}^\ell (1-\mu_j \bar{\psi}_j)}\ .
\end{multline*}
\end{thm}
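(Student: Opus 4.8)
The plan is to prove the identity by $\mathbb{C}^{*}$-virtual localization, in a way that generalizes the localization proof of the ELSV formula recovered at $a=1$. First I would reinterpret the left-hand side. The double Hurwitz number $H_g(\gamma_+,\mu)$ is to be expressed as an integral over a moduli space of relative twisted stable maps built from $\proj^1$ and $\B\mathbb{Z}_a$: twisted stable maps of degree $d$ to $\proj^1$ carrying a $\B\mathbb{Z}_a$-structure at $0$, relative to the ordinary point $\infty$, with the ramification profile $\mu$ imposed over $\infty$ and the $r_g(\gamma_+,\mu)$ simple branch points appearing as point conditions on $\proj^1\setminus\{0,\infty\}$. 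In this dictionary the parts of $\gamma_+=(\gamma_1,\dots,\gamma_n,a,\dots,a)$ equal to $a$ record the locus over $0$ where the twisted map is \'etale onto the orbifold chart, and the parts $\gamma_i<a$ record local monodromy $\gamma_i$. Comparing this orbifold count with the honest weighted count of Hurwitz covers of the coarse curve $\proj^1$ produces the elementary prefactor: ordering the $r_g(\gamma_+,\mu)$ branch points gives $r_g(\gamma_+,\mu)!$, the local analysis over $\infty$ gives $\prod_j \mu_j^{\lfloor \mu_j/a\rfloor}/\lfloor \mu_j/a\rfloor!$ together with part of the power of $a$, and passing between ordered and unordered data gives $1/(|\mathrm{Aut}(\gamma)|\,|\mathrm{Aut}(\mu)|)$.

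Next I would localize with respect to the scaling $\mathbb{C}^{*}$-action on $\proj^1$ fixing $0$ and $\infty$. The fixed loci are indexed by decorated graphs: vertices over $0$ carry contracted subcurves together with their admissible $\mathbb{Z}_a$-cover data, vertices over $\infty$ carry rational or rubber components recording the $\mu_j$-profile, and edges are totally ramified multiple covers of the axis. The principal fixed locus --- all of the genus contracted over the orbifold point $0$, with one trivial edge (and an unstable rational vertex over $\infty$) for each part $\mu_j$ --- is canonically identified with $\overline{\M}_{g,\gamma-\mu}(\B\mathbb{Z}_a)$, the $n$ markings of monodromy $\gamma_i$ together with the $\ell$ nodal markings of monodromy $-\mu_j$. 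Its localization contribution should assemble exactly into the right-hand side: the moving part of the virtual normal bundle over the contracted vertex is (dual to) the $U$-isotypic Hodge bundle $\mathbb{E}^{U}$ with an appropriate equivariant twist, whose Euler class is $\sum_i(-a)^{i}\lambda_i^{U}$ up to the power of $a$ given by $1-g-\sum_i\gamma_i/a+\sum_j\langle \mu_j/a\rangle$ --- these terms coming from the Euler characteristic of the contracted curve and the ages at the markings, via the orbifold Riemann--Roch computation of Section~1; smoothing the $\ell$ nodes toward $\infty$ inserts $\prod_j 1/(1-\mu_j\bar\psi_j)$; and the edge and unstable-vertex factors reproduce the remaining elementary terms already isolated in the comparison step.

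It then remains to show that every non-principal fixed locus contributes zero. I would follow the strategy of Graber--Vakil and Okounkov--Pandharipande: introduce a suitable auxiliary insertion (or pass to the natural generating function and take a non-equivariant limit) so that a dimension-and-weight count kills all graphs with several components or positive genus over $0$ and all graphs with higher bubbling over $\infty$, the remaining cancellations being forced by the string and dilaton equations together with the vanishing of Hodge classes above the rank of $\mathbb{E}^{U}$ and Mumford's relation for the Hodge bundle. An alternative packaging --- which also yields the wreath-product statements of the abstract and makes the trivial-group specialization to ELSV transparent --- would first establish a master formula, valid for all abelian $G$, writing linear Hurwitz-Hodge integrals over $\overline{\M}_{g,\gamma}(\B G)$ as matrix elements in the group algebra of the wreath product $G\wr\Sigma_d$ (the Fock-space formalism for the Gromov--Witten theory of $\B G$), and then check that for $G=\mathbb{Z}_a$ and $U$ faithful this matrix element collapses to a product in $\Z\Sigma_d$ of the shape of \eqref{hursym}, namely $H_g(\gamma_+,\mu)$.

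I expect the main obstacle to be twofold. On the geometric side the delicate point is the orbifold Riemann--Roch bookkeeping on the principal fixed locus: the age contributions at the $\gamma_i$-markings, at the $\mu_j$-markings, and at the nodes must be tracked precisely enough to produce both the normalization $\sum_i(-a)^i\lambda_i^U$ and the exact exponent $1-g-\sum_i\gamma_i/a+\sum_j\langle \mu_j/a\rangle$ of $a$, and to confirm that the rank of $\mathbb{E}^{U}$ is the one forced by the non-negativity and boundedness hypotheses (which is also where those hypotheses enter). On the combinatorial side the genuine points are the comparison that converts the orbifold count of twisted stable maps into the classical weighted Hurwitz count $H_g(\gamma_+,\mu)$ with exactly the stated elementary prefactor --- a local computation of maps to $[\mathbb{C}/\mathbb{Z}_a]$ versus ramified covers of $\mathbb{C}$ --- and the vanishing of the non-principal loci, which is heavier than in the $a=1$ case because the $\mathbb{Z}_a$-twisting enlarges the set of fixed-point graphs.
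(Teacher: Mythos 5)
Your overall architecture --- rewrite $H_g(\gamma_+,\mu)$ as an integral over relative stable maps to the orbifold line $\proj^1[a]$, localize with respect to the scaling $\com^*$-action, and identify the principal fixed locus with $\overline{\M}_{g,\gamma-\mu}(\B \mathbb{Z}_a)$ --- is exactly the paper's, and your description of the normal-bundle bookkeeping on that locus (the dual Hodge bundle $(\bE^{U})^\vee$ arising from $H^1$ of the contracted component, the node smoothings producing $\prod_j(1-\mu_j\bar\psi_j)^{-1}$, orbifold Riemann--Roch for the exponent of $a$) matches the actual computation, even if you misattribute a few elementary prefactors (e.g.\ $\prod_j \mu_j^{\lfloor\mu_j/a\rfloor}/\lfloor\mu_j/a\rfloor!$ comes from the $H^0$ weights on the Galois-cover edges, and $r!$ from the Euler class of the normal bundle of $r[0]$ in $\mathrm{Sym}^r(\proj^1)$, not from the combinatorial comparison of counts).

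The genuine gap is the step you defer to the end: the vanishing of the non-principal fixed loci. You propose to impose the simple branch points as point conditions on $\proj^1\setminus\{0,\infty\}$ and then to kill the remaining graphs by dimension counts, the string and dilaton equations, and Mumford's relation. As stated this does not get off the ground: the condition that a branch point lie at a specified point of $\com^*$ is not $\com^*$-equivariant, so it cannot be fed into the localization formula, and you never identify the equivariant class that replaces it. The paper's device is the branch morphism $\text{br}$ of Fantechi--Pandharipande to $\mathrm{Sym}^{2g-2+d+\ell}(\proj^1)$: Lemma \ref{HHH} shows that under parity, non-negativity and boundedness the branch divisor always contains $\left(d-n-\frac{d-\sum_i\gamma_i}{a}\right)[0]$, so one may integrate $\text{br}_0^*(H^r)$ against the virtual class with $H^r$ lifted equivariantly to the fixed point $r[0]\in\mathrm{Sym}^r(\proj^1)$. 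This lift forces all residual branching to $0$, and the only fixed loci meeting $\text{br}_0^{-1}(r[0])$ are the principal ones --- no cancellation among graphs, and no string/dilaton or Mumford input, is ever required. This is also where the hypotheses of the theorem actually work: boundedness ($\gamma_i+\gamma_j\le a$) is what controls the bubbling of stack points over $[0/\mathbb{Z}_a]$ in the degeneration analysis of Lemma \ref{HHH}, and non-negativity is what makes the coarse profile $\gamma_+$ realizable (Lemma \ref{ozoz}). Your proposal instead locates these hypotheses in the rank computation for $\bE^{U}$, where they play no role; a proof that never uses boundedness in the analysis of degenerations over $0$ cannot be completed along the lines you sketch.
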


The integer and fractional parts of a rational number are denoted
in the above formula by
$$q = \lfloor q \rfloor + \< q\>, \ \ q\in \mathbb{Q}.$$
The cotangent lines in the denominator on the far right are associated
to the stack points of the stable map domain
corresponding to the parts of $\mu$.

Theorem \ref{vvv} is proven by virtual localization
on the moduli space of stable maps to the stack $\proj^1[a]$ with
$\mathbb{Z}_a$-structure at $0$ following the arguments of \cite{FP,GV1}.
The space of stable maps to $\proj^1[a]$
is discussed in Section 1, and the proof
is given in Section 2.
The formula is easily seen to determine {\em all}
linear $\mathbb{Z}_a$-Hodge integrals
with respect to ${U}$  in terms of double Hurwitz numbers.
In fact, the set of evaluations with $\gamma = \emptyset$ is sufficient.
Conversely, every double Hurwitz number is realized for $a$ sufficiently large.

For the disconnected formula, we assume $\gamma=\emptyset$
and the parity condition $d = 0 \ (\text{mod} \ a)$.{\footnote{If $\gamma\neq \emptyset$, 
the non-negativity condition may satisfied globally
 but be violated on connected components.}} Then,
Theorem \ref{vvv} holds in exactly the same form,
\begin{equation}\label{dderr}
H^\bullet_g(\emptyset_+,\mu) =
\frac{r_g(\emptyset_+,\mu)!}{
|\text{Aut}(\mu)|} a^{1-g
+ \sum_{j=1}^\ell \< \frac{\mu_j}{a} \>}
\prod_{j=1}^\ell\frac{\mu_j^
{\left\lfloor\frac{\mu_j}{a}
\right\rfloor}}{\left\lfloor\frac{\mu_j}{a}\right\rfloor!}
\int_{\overline{\M}^\bullet_{g,-\mu}(\B \mathbb{Z}_a)}
\frac{\sum_{i =0}^\infty
(-a)^i \lambda_i^{U}}{\prod_{j=1}^\ell (1-\mu_j \bar{\psi}_j)}\ .
\end{equation}

The ELSV formula \cite{ELSV} for
linear Hodge integrals on the moduli space of curves
arises from the $a=1$ specialization of Theorem \ref{vvv},
\begin{equation*}
H_g(\mu) =\\
\frac{(2g-2+d+\ell)!}{|\text{Aut}(\mu)|}
\prod_{j=1}^\ell\frac{\mu_j^{\mu_j}}
{\mu_j!}
\int_{\overline{\M}_{g,\ell}}
\frac{\sum_{i=0}^g
(-1)^i \lambda_i}{\prod_{j=1}^\ell (1-\mu_j {\psi}_j)}\ .
\end{equation*}
For $a=1$, we must have $\gamma=\emptyset$.

The conditions $\gamma$ allow for greater freedom in the $a> 1$
case.
For example, the proof of Theorem \ref{vvv} yields a remarkable
vanishing property. The monodromy conditions $\gamma$ satisfy
{negativity} if
$$d-\sum_{i=1}^n \gamma_i <0$$
and {strong negativity} if
$$d-n - \frac{d-\sum_{i=1}^n \gamma_i}{a} <0.$$
Strong negativity is easily seen to imply negativity.

\begin{thm}\label{vvvv} Let $\gamma=(\gamma_1,\ldots,\gamma_n)$
be nontrivial
monodromies in $\mathbb{Z}_a$ satisfying
the parity condition with respect to the partition $\mu$.
In addition, let $\gamma$ satisfy at least one of the
following two conditions:
\begin{enumerate}
\item[(i)]  negativity and boundedness, or \\
\item[(ii)] strong negativity.
\end{enumerate}
Then, a vanishing results for Hurwitz-Hodge integrals holds:
\begin{equation*}
\int_{\overline{\M}_{g,\gamma-\mu}(\B \mathbb{Z}_a)}
\frac{\sum_{i=0}^\infty
(-a)^i \lambda_i^{U}}{\prod_{j=1}^\ell (1-\mu_j \bar{\psi}_j)} =0.
\end{equation*}
\end{thm}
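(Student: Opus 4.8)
The plan is to extract the vanishing from the virtual localization computation that proves Theorem~\ref{vvv}, run in the regime where $\gamma$ fails non-negativity. Recall that that proof localizes, with respect to the standard $\mathbb{C}^*$-action on $\proj^1[a]$ (acting through the coarse $\proj^1$ and fixing the $\mathbb{Z}_a$-point $0$ together with $\infty$), on a moduli space of twisted stable maps to $\proj^1[a]$ carrying monodromy $\gamma$ at $0$ and a profile-$\mu$ condition near $\infty$, and integrates a natural equivariant class which, when $\gamma_+$ is a genuine partition, evaluates $H_g(\gamma_+,\mu)$ up to the combinatorial prefactor of Theorem~\ref{vvv}. The $\mathbb{C}^*$-fixed loci are indexed by decorated graphs, and among them there is a distinguished graph with a single contracted vertex over $0$ whose associated moduli space is exactly $\overline{\M}_{g,\gamma-\mu}(\B \mathbb{Z}_a)$. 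Along this locus the deformation--obstruction and Hodge-bundle contributions assemble — up to a nonzero monomial in the equivariant parameter — into the class $\sum_{i\geq 0}(-a)^i\lambda_i^{U}$, while the deformations smoothing the nodes at which the $\mu$-edges attach produce the factors $1/(1-\mu_j\bar{\psi}_j)$. Thus this graph contributes precisely a nonzero constant times the Hurwitz--Hodge integral appearing in the statement, and every other fixed locus involves only trivial covers of $\proj^1$ on the $\infty$-side.

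First I would check that under hypothesis (i) or (ii) the localized integral vanishes before any decomposition: heuristically $\gamma_+$ would carry $(d-\sum_i\gamma_i)/a<0$ parts of size $a$, so no cover of the prescribed numerical type exists; rigorously, one shows that the equivariant class being integrated sits in degree strictly greater than the virtual dimension of the moduli space, so its integral is $0$. This dimension comparison is exactly where the hypotheses enter: negativity together with boundedness, respectively strong negativity, are the two inequalities under which the count fails, and I would run the two cases in parallel. Next I would apply $\mathbb{C}^*$-localization to this identically vanishing integral. The decisive step is a weight analysis showing that, in the negativity range, every $\mathbb{C}^*$-fixed locus \emph{other than} the single-vertex-over-$0$ graph contributes zero — the competing vertices being forced to carry components of non-positive deformation space, or the chains over $\infty$ being forced into configurations whose normal Euler classes vanish. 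Granting this, the localization identity collapses to
\[
0 \;=\; c\cdot \int_{\overline{\M}_{g,\gamma-\mu}(\B \mathbb{Z}_a)}\frac{\sum_{i=0}^{\infty}(-a)^i\lambda_i^{U}}{\prod_{j=1}^\ell(1-\mu_j\bar{\psi}_j)},\qquad c\neq 0,
\]
which is the assertion of the theorem.

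I expect the weight bookkeeping of the last step to be the main obstacle: the competing graphs must be eliminated \emph{uniformly} over hypotheses (i) and (ii), with care for the orbifold structure at the nodes and the fractional tangent weight at the $\mathbb{Z}_a$-point $0$. A secondary input, needed to make the dimension comparison precise, is the orbifold Riemann--Roch / Chevalley--Weil computation of $\text{rk}\,\mathbb{E}^{U}$ along the relevant fixed loci, together with the (purely combinatorial) check that the constant $c$ above is nonzero. Both are already available from Section~1 and the proof of Theorem~\ref{vvv}, so once the competing-graph vanishing is in hand the argument closes by specializing that proof to the negativity regime.
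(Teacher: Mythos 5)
Your overall strategy is the paper's: show that $\int_{[\overline{\M}_{g,\gamma}(\proj^1[a],\mu)]^{vir}}\text{br}_0^*(H^r)$ vanishes identically, then compute it by the same virtual localization as in the proof of Theorem \ref{vvv} (with a nonvanishing equivariant lift of $H^r$), and read off that the surviving contribution is a nonzero multiple of the Hurwitz--Hodge integral. Your case (ii) is essentially correct, with one wording correction: the class $\text{br}^*(H^r)$ has degree exactly the virtual dimension $r$; what strong negativity buys is $r>2g-2+d+\ell=\dim\text{Sym}^{2g-2+d+\ell}(\proj^1)$, so $H^r=0$ already in the cohomology of the \emph{target} of the branch map. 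Also, your anticipated ``main obstacle'' --- eliminating the competing fixed graphs --- is not an obstacle at all: the equivariant lift of $H^r$ is chosen to be supported at the fixed point $r[0]\in\text{Sym}^r(\proj^1)$ (respectively $(2g-2+d+\ell)[0]\cdot t^{-\delta}$ in case (ii)), so all fixed loci not mapping there under $\text{br}_0$ are killed automatically, exactly as in the proof of Theorem \ref{vvv}.

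The genuine gap is case (i). You assert that both hypotheses feed into the same ``dimension comparison,'' but negativity plus boundedness does \emph{not} imply $r>2g-2+d+\ell$ --- that inequality is precisely strong negativity, which you are not assuming in case (i). Under (i) alone the integrand $\text{br}_0^*(H^r)$ lives in the correct degree and no degree count forces the integral to vanish. The correct mechanism, and the one the paper uses, is geometric: by the proof of Lemma \ref{HHH} (this is exactly where boundedness enters, to control bubbling of the stack points over $0$), any map $f$ with $[0]\notin\text{br}_0(f)$ must have no contracted components over $0$ and coarse ramification profile $\gamma_+$ over $0$; negativity says no such cover exists, since $\gamma_+$ would require a negative number of parts equal to $a$. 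Hence $[0]$ lies in $\text{br}_0(f)$ for \emph{every} $f$, so $\text{br}_0$ factors through the divisor $[0]+\text{Sym}^{r-1}(\proj^1)\cong\proj^{r-1}$, on which $H^r=0$. Your ``heuristic'' (no cover of the prescribed numerical type exists) is in fact the essential point, but it must be promoted to this statement about the image of the branch map on the entire compactified moduli space --- including degenerate maps --- rather than replaced by a dimension comparison that does not hold in case (i).
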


A few examples of Theorems \ref{vvv} and \ref{vvvv} where alternative
approaches to the integrals are available are presented in
Section 3.

\subsection{Abelian $G$} 
Since any faithful representation $R$ of $\mathbb{Z}_a$ differs from $U$ 
by an automorphism of $\mathbb{Z}_a$, Theorem \ref{vvv} determines linear
Hodge integrals with respect to $R$.
Representations of $\mathbb{Z}_a$ with kernels require an additional analysis.

Let $G$ be an abelian group with group law written additively.
Consider an irreducible representation $R$,
$$\phi^R: G \rightarrow \com^*,$$  with associated exact sequence
\begin{equation} \label{exactsequence}
0 \rightarrow K \rightarrow G \stackrel{\phi^R}{\rightarrow}
 \text{Im}(\phi^R) \stackrel{\sim}{=} \mathbb{Z}_a \rightarrow0.
 \end{equation}

The homomorphism $\phi^R$ induces a canonical morphism
$$\rho: \overline{\M}_{g,\gamma}(\B G) \rightarrow
\overline{\M}_{g,\phi^R(\gamma)}(\B \mathbb{Z}_a).$$
The morphism $\rho$ satisfies
$$\rho^*(\lambda_i^U) = \lambda_i^R$$ and has the same degree over each component of $\overline{\M}_{g,\phi^R(\gamma)}(\B \mathbb{Z}_a)$.
Therefore, linear Hodge integrals with respect to
$R$ can be calculated by multiplying the formula of Theorem
\ref{vvv}
by the degree of $\rho$.

In Section 4, the solution for arbitrary $G$ and $R$ is cast
in a more appealing way.  When $$\phi^R(\gamma)=-\mu\in\mathbb{Z}_a,$$
 Hodge integrals of the form
$$\int_{\overline{\M}_{g,\gamma}(\B G)}
\frac{\sum_{i =0}^\infty
(-a)^i \lambda_i^{R}}{\prod_{j=1}^\ell (1-\mu_j \bar{\psi}_j)}$$
are expressed in terms of Hurwitz numbers for $K_d$, the wreath product of $K$ with the symmetric group $\Sigma_d$.  
Since the infinite wedge formalism for $\Sigma_d$ extends to a Fock space formalism for the wreath product $K_d$, 
there is again a connection to integrable systems \cite{QW}.

Conjugacy classes in $K_d$ are indexed by $\text{Conj}(K)$-weighted partitions of $d$,
$$ \overline{\mu}=\{(\mu_1,\kappa_1), \dots, (\mu_{\ell(\mu)}, \kappa_{\ell(\mu)})\}. $$
Here, $\mu$ is a partition of $d$ with parts $\mu_j$, the weights
 $\kappa_i\in\text{Conj}(K)$ are conjugacy classes 
in $K$, and $\overline{\mu}$
is an unordered set of pairs.  
Let $\text{Aut}(\overline{\mu})$ denote
the automorphism group of $\overline{\mu}$.
Let  $C_{\overline{\mu}}\in \Z K_d$ be the element of the
group algebra associated to the conjugacy class $\overline{\mu}$.
The transposition element
$T\in \Z K_d$ is associated to conjugacy class of $K_d$ indexed by
$$\overline{\tau}=\{(2,0),(1,0),\ldots,(1,0)\}$$
where all the $\text{Conj}(K)$-weights are 0.

The wreath product $K_d$ has a forgetful map to $\Sigma_d$ which sends
 elements of cycle type $\overline{\mu}$ to elements of type $\mu$.  
The {\em $K_d$-Hurwitz number}
$H_{g,K}(\overline{\nu},\overline{\mu})$
counts the degree $d|K|$-fold covers of $\proj^1$ with monodromy in $K_d$ 
given by $\overline{\nu}$ and $\overline{\mu}$ at $0,
\infty\in \proj^1$ and $\overline{\tau}$ at all the points of
$$U_{r_{g}(\nu,\mu)}\subset \proj^1.$$
Since $K\subset K_d$ is contained in the center,
any such cover has a canonical $K$-action which defines a $K$-bundle 
over a punctured Hurwitz cover 
counted by $H_g(\nu,\mu)$.  
The connectivity requirement we place on covers 
counted by $H_{g,K}(\overline{\nu},
\overline{\mu})$
is {\em not} that the $d|K|$-fold cover is connected, but only that the
associated Hurwitz $d$-fold cover is connected.  
Similarly, $g$ is the genus of the $d$-fold cover.

The natural extension of formula (\ref{hursym}) for disconnected
Hurwitz covers for the wreath product $K_d$ is
$$
H^\bullet_{g, K}(\overline{\nu}, \overline{\mu})=
\frac{1}{|K_d|}\big(
C_{\overline{\nu}}T^{r_g(\nu, \mu)} 
C_{\overline{\mu}}\big)_{[\text{Id}]} \ ,
$$
where the product on the right takes place in the group algebra of $K_d$.

Select an element $x\in G$ with $\phi^R(x)=1$.  
Let  $k=ax\in K$.  Denote by $-\overline{\mu}$ the 
$\ell({\mu})$-tuple of elements of 
$G$ defined by:
$$-\overline{\mu}
=(\kappa_1-\mu_1 x,\kappa_2-\mu_2 x,\dots,\kappa_{\ell(\mu)} -\mu_{\ell(\mu)} x).$$
Although the parts of $\overline{\mu}$ are unordered, 
an ordering is chosen for $-\overline{\mu}$.  
The parity condition is now
$$\sum_{j=1}^\ell \kappa_j -\mu_j x = 0 \in G.$$
Denote by
 $\emptyset_{+}(k)$ the conjugacy class given by
$$\emptyset_{+}(k)=\{\underbrace{ (a, -k),\dots, (a,-k)}_{\text{$d/a$ times}}\}.$$

\begin{thm} For weighted-partitions $\overline{\mu}$ satisfying the parity condition,
\label{vvvvv}
$$H_{g,K}(\emptyset_+(k), \overline{\mu})
=\frac{r_g(\emptyset_+,\mu)!}{\
|\text{\em Aut}(\overline{\mu})|} a^{1-g+ \sum_{j=1}^\ell \< \frac{\mu_j}{a} \>}
\prod_{j=1}^\ell\frac{\mu_j^
{\left\lfloor\frac{\mu_j}{a}
\right\rfloor}}{\left\lfloor\frac{\mu_j}{a}\right\rfloor!}
\int_{\overline{\M}_{g,-\overline{\mu}}(\B G)}
\frac{\sum_{i =0}^\infty
(-a)^i \lambda_i^{R}}{\prod_{j=1}^\ell (1-\mu_j \bar{\psi}_j)}\ .$$
\end{thm}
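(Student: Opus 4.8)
The plan is to deduce Theorem \ref{vvvvv} from Theorem \ref{vvv} by analyzing the canonical map $\rho:\overline{\M}_{g,\gamma}(\B G)\rightarrow\overline{\M}_{g,\phi^R(\gamma)}(\B\mathbb{Z}_a)$ on the geometric side and the corresponding covering-space correspondence on the Hurwitz side, checking that the two multiplicative discrepancies agree. First I would record that, since $\rho^*(\lambda_i^U)=\lambda_i^R$ and $\rho^*(\bar\psi_j)=\bar\psi_j$, the projection formula gives
\begin{equation*}
\int_{\overline{\M}_{g,-\overline{\mu}}(\B G)}
\frac{\sum_i (-a)^i\lambda_i^R}{\prod_j(1-\mu_j\bar\psi_j)}
=\deg(\rho)\cdot
\int_{\overline{\M}_{g,-\mu}(\B\mathbb{Z}_a)}
\frac{\sum_i(-a)^i\lambda_i^U}{\prod_j(1-\mu_j\bar\psi_j)},
\end{equation*}
so Theorem \ref{vvvvv} is equivalent to the purely combinatorial identity
\begin{equation*}
H_{g,K}(\emptyset_+(k),\overline{\mu})
=\frac{|\text{Aut}(\mu)|}{|\text{Aut}(\overline{\mu})|}\cdot\deg(\rho)\cdot H_g(\emptyset_+,\mu),
\end{equation*}
after substituting the formula of Theorem \ref{vvv} (with $\gamma=\emptyset$) for $H_g(\emptyset_+,\mu)$; note the $a$-power and $\mu_j$-product prefactors are literally the same in both theorems, and $r_g(\emptyset_+,\mu)$ depends only on $\mu$ and $g$, so those all cancel cleanly.

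Next I would compute $\deg(\rho)$. A point of $\overline{\M}_{g,\phi^R(\gamma)}(\B\mathbb{Z}_a)$ over a fixed base is a $\mathbb{Z}_a$-bundle, i.e.\ a surjection $\pi_1^{orb}\twoheadrightarrow\mathbb{Z}_a$ with prescribed local monodromies; lifting it along $G\twoheadrightarrow\mathbb{Z}_a$ with the prescribed local monodromies in $G$ (which are exactly the $\kappa_j-\mu_j x$, reducing to $-\mu_j\bmod a$) is a torsor under $H^1$ of the base with coefficients in $K$, but constrained by the marked monodromies. The upshot is that $\deg(\rho)$ is $|K|^{2g_C}$ divided by the appropriate constraint factor coming from the $n+\ell$ markings — more precisely it counts lifts of a fixed homomorphism, which is $|K|^{(\text{rank of }H^1)}$ cut down by the fixed-monodromy conditions at marked points. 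I expect this to come out to $|K|^{2g_C - 1 + (\text{number of markings})}$-type expression when the monodromy constraints are consistent (which the parity condition guarantees), but the exact bookkeeping — especially reconciling the genus $g$ of the cover with the genus $g_C$ of the base via Riemann–Hurwitz, and handling the stacky structure at markings where isotropy has order dividing $a$ rather than $|G|$ — is where care is needed.

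The main obstacle, and the heart of the argument, is matching this $\deg(\rho)$ against the corresponding factor on the Hurwitz side: I must show that a $d$-fold Hurwitz cover counted by $H_g(\emptyset_+,\mu)$ (equivalently, a $\mathbb{Z}_a$-admissible cover, by the $a=1$-versus-$\mathbb{Z}_a$ correspondence already used implicitly in Theorem \ref{vvv}) extends to a $d|K|$-fold $K_d$-cover counted by $H_{g,K}(\emptyset_+(k),\overline{\mu})$ in exactly $|\text{Aut}(\mu)|/|\text{Aut}(\overline{\mu})|\cdot\deg(\rho)$ ways, weighted by automorphisms. Concretely, a $K_d$-cover with the specified ramification is a $K$-bundle over the punctured Hurwitz cover with prescribed local monodromies $\kappa_j$ at the preimages of $\infty$ and $-k$ at the preimages of $0$ (there are $d/a$ of the latter, matching $\emptyset_+(k)$), and trivial monodromy $0$ over the simple branch points — so the count of such $K$-bundles over a fixed connected genus-$g$ Hurwitz cover is again a torsor-type count governed by $H^1$ of that curve with $K$-coefficients and the marked-point constraints, which by the same linear algebra equals $\deg(\rho)$ after identifying the two sets of constraints via $\phi^R$. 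The automorphism ratio $|\text{Aut}(\mu)|/|\text{Aut}(\overline\mu)|$ appears because refining the partition $\mu$ to the $\text{Conj}(K)$-weighted partition $\overline\mu$ breaks some of the symmetry among equal parts; I would track this by comparing the stabilizers carefully. I would organize the proof as: (1) reduce to the combinatorial identity via the projection formula as above; (2) express both $\deg(\rho)$ and the $K$-bundle-extension count as $H^1(\,\cdot\,;K)$-torsor counts subject to fixed marked monodromies, and verify they are given by the same formula; (3) reconcile the automorphism factors; (4) assemble, citing Theorem \ref{vvv} for $H_g(\emptyset_+,\mu)$. The disconnected analogue would be identical with $\overline{\M}^\bullet$ throughout, since the connectivity hypothesis on $H_{g,K}$ is imposed only on the base Hurwitz cover.
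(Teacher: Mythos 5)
Your proposal follows essentially the same route as the paper: pull back the integrand along $\rho$ (using $\rho^*\lambda_i^U=\lambda_i^R$ and $\rho^*\bar\psi_j=\bar\psi_j$), reduce to the identity $H_{g,K}(\emptyset_+(k),\overline{\mu})=\frac{|\mathrm{Aut}(\mu)|}{|\mathrm{Aut}(\overline{\mu})|}\deg(\rho)H_g(\emptyset_+,\mu)$, reinterpret wreath Hurwitz covers as $K$-bundles over ordinary Hurwitz covers, and match the two monodromy-lifting counts. One small correction to your heuristic: since the local monodromies at the marked points are fully prescribed, the number of markings does not enter, and both counts equal $|K|^{2g}$ lifts divided by the automorphism factor $|G|/a=|K|$, i.e.\ $|K|^{2g-1}$ (when the respective parity conditions hold, which one checks are equivalent via $ax=k$) --- exactly as in the paper.
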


Theorem \ref{vvvvv} determines all linear Hurwitz-Hodge integrals for $G$
and holds in exactly the same form for
the disconnected theories $H_{g,K}^\bullet(\emptyset_+(k),
\overline{\mu})$ and 
$\overline{\M}^\bullet_{g,-\overline{\mu}}(\B G)$.

\subsection{Future directions}
The ELSV formula has two immediate applications in Gromov-Witten theory.
The first is the determination of descendent integrals over
$\overline{\M}_{g,n}$ via asymptotics to remove the Hodge classes
\cite{KL,OP1}. The second is the exact evaluation of the
vertex integrals in the localization formula for $\proj^1$
in \cite{OP2,OP3}. The latter requires the Hodge classes.

Since $\epsilon:\overline{\M}_{g,\gamma}(\B G) \rightarrow
\overline{\M}_{g,n}$ is a
finite map, a geometric approach to the descendent
integrals is not strictly necessary \cite{JK}. However,
for the calculation of the Gromov-Witten theory of target curves with
orbifold structure \cite{PJ}, Theorem \ref{vvvvv} 
is essential.
The results
may be viewed as a first step for orbifolds
along the successful line of exact Hodge integral formulas
which have culminated in the topological and equivariant
vertices in ordinary Gromov-Witten theory.

Hurwitz-Hodge integrals can be viewed 
as pairings of tautological classes
$$\epsilon_*(\lambda_i^R) \in H^{2i}(\overline\M_{g,n},\mathbb{Q})$$
against the descendents $\psi_i$. 
Given an
action
$$\alpha:G \times \{1,\ldots,k\} \rightarrow \{1,\ldots,k\}$$
on a set with $k$ elements, there is a second map to the
moduli space of curves.
Let
$$ \C \rightarrow
\overline{\M}_{g,\gamma}(\B G), \ \
\mathcal{D} \rightarrow \C
$$
be the universal domain curve and the
universal $G$-bundle respectively.
A second universal curve
$$\mathcal{D}^\alpha = \mathcal{D}\times _G \{1,\ldots,k\} \rightarrow
\overline{\M}_{g,\gamma}(\B G)$$
is obtained by the mixing construction. We obtain
$${\epsilon}^\alpha:\overline{\M}_{g,\gamma}(\B G) \rightarrow
\overline{\M}_{g^\alpha,n^\alpha},$$
where $g^\alpha$ and $n^\alpha$ are the genus and the number
of distinguished
sections{\footnote{We
suppress the ordering issues  here.}}
of the universal curve $\mathcal{D}^\alpha$.
Two questions immediately arise:
\begin{enumerate}
\item[(i)] Do the classes ${\epsilon}^\alpha_*(\lambda_i^R)$ lie in the
tautological ring of
$\overline{\M}_{g^\alpha,n^\alpha}$?
\item[(ii)] Do the pairings of 
${\epsilon}^\alpha_*(\lambda_i^R)$ against the
descendents of $\overline{\M}_{g^\alpha,n^\alpha}$
admit simple evaluations?
\end{enumerate}
The answer to (i) is known \cite{GPC} to be false for $g=1$,
but may be true for $g=0$.
See \cite{FPJ} for positive results related to (i) for
the standard action
of the symmetric group $\Sigma_k$ in the $g=0$ case.

\subsection{Acknowledgments}
We thank J. Bryan, R. Cavalieri, 
T. Graber, C. Faber, D. Maulik, A. Okounkov, Y. Ruan, and
R. Vakil
for related conversations.

P.J. was partially supported by RTG grant DMS-0602191 at the University
of Michigan.
R.P. was partially supported
by DMS-0500187. H.-H. T.
thanks the Institut Mittag-Leffler for hospitality and support during
a visit in Spring 2007.
The paper was furthered at a
lunch in Kyoto while the last two authors were visiting RIMS in
January 2008. Section \ref{bnf} was added after discussions at the
Banff workshop on
{\em Recent progress on the moduli of curves} in March 2008.

\section{Stable relative maps} \label{sm}
\subsection{Definitions}
For $a\geq 1$, let
$\proj^1[a]$ be the projective line
 with a single stack point of order $a$ at $0$. Let
$$\<\zeta_a\>\subset \com^*, \ \ \ \zeta_a = e^{\frac{2\pi i}{a}}  $$
be the group of $a^{th}$-roots of unity.
Locally at 0, $\proj^1[a]$ is
the quotient stack
$\com/ \<\zeta_a\>$.
Alternatively, $\proj^1[a]$ is the  $a^{th}$-root stack of
  $\proj^1$
along the divisor $0$.

Let $\Mbar_{g,\gamma}(\proj^1[a],\mu)$ be the stack of stable
relative maps to $(\proj^1[a], \infty)$ where
$\gamma= (\gamma_1, \ldots, \gamma_n)$
is a vector of nontrivial elements
 $$1\leq \gamma_i \leq  a-1, \ \ \gamma_i \in \mathbb{Z}_a,$$
and  $\mu$ is a partition of $d\geq 1$ with parts $\mu_j$ and length $\ell$.
The moduli space parametrizes maps
$$[\ f:(C,p_1,\ldots,p_n) \rightarrow \proj^1[a]\ ] \in
\Mbar_{g,\gamma}(\proj^1[a],\mu)$$
for which
\begin{enumerate}
\item[(i)] the domain $C$ is a nodal curve of genus $g$ with stack structure
at $p_i$ determined by $\gamma_i$,
\item[(ii)]
relative conditions over $\infty\in \proj^1[a]$
are given by the partition $\mu$.
\end{enumerate}
The isotropy group of $p_i\in C$ is the subgroup of $\mathbb{Z}_a$
generated by $\gamma_i$. Let $a_i$ denote the order of $\gamma_i$.
The domain $C$, called
a {\em twisted curve}, may have additional stack
structure at the nodes, see \cite{AV}.

We recall the Riemann-Roch formula for twisted curves.{\footnote{See
Theorem 7.2.1 of \cite{AGV} for precisely our situation.}}
Let $C$ be a twisted curve whose nonsingular
 stack points are $p_1,..., p_n$ with cyclic isotropy groups
$I_1,\ldots,I_n$. The group $I_i$ is
identified with the $a_i^{th}$-roots of unity via the action on $T_{p_i}C$,
$$I_i \stackrel{\sim}{\rightarrow} \<\zeta_{a_i}\>\subset \com^*, \ \
\zeta_{a_i} = e^{\frac{2\pi i}{a_i}}.$$
 Let $E$ be a locally free sheaf over the stack $C$.
 Then, $I_i$ acts on the restriction $E|_{p_i}$. Let
$$E|_{p_i}=\bigoplus_{0\leq s\leq a_i-1}V_s^{\oplus e_s}$$
be the direct sum decomposition,
 where
$V_s$ is the irreducible representation of $\mathbb{Z}_{a_i}$ associated to the character
$$\phi^s:I_i\to \com^*,\ \ \ \phi^s(\zeta_{a_i})= \zeta_{a_i}^s .$$
 The {\em age} of $E$ at $p_i$ is defined by
 $$\text{age}_{p_i}(E)=\sum_{0\leq s\leq a_i-1} e_s \frac{s}{a_i}\ .$$
The Riemann-Roch formula for twisted curves is given as follows:
\begin{equation}\label{rr_twisted_curve}
\chi(C, E)=\text{rk}(E) (1-g)+\text{deg}(E)-\sum_{i=1}^n\text{age}_{p_i}(E).
\end{equation}

The virtual dimension of $\Mbar_{g,\gamma}(\proj^1[a],\mu)$ is calculated by the Riemann-Roch formula
\eqref{rr_twisted_curve}.
 Let
$$[ \ f:(C,p_1,\ldots,p_n)\to \proj^1[a]\ ]\in \Mbar_{g,\gamma}(\proj^1[a],\mu).$$
Certainly, $\text{deg}\left( f^*T_{\proj^1[a]}(-\infty)\right)=d/a$.
By the quotient presentation of $\proj^1[a]$, the character of $f^*T_{0,\proj^1[a]}$  at $p_i$
is
$$\zeta_{a_i} \mapsto \zeta_{a_i}^{\frac{\gamma_i a_i }{a}}= \zeta_a^{\gamma_i}.$$
Therefore,
$\text{age}_{p_i}\left(f^*T_{\proj^1[a]}(-\infty)\right)= \frac{\gamma_i}{a}$ and
\begin{equation*}
\begin{split}
\text{vdim}\,\Mbar_{g,\gamma}(\proj^1[a],\mu)&=3g-3+n+\ell+\chi(C, f^*T_{\proj^1[a]}(-\infty))\\
&=3g-3+n+\ell+ 1-g+\frac{d}{a}-\sum_{i=1}^n \frac{\gamma_i}{a}\\
&=2g-2+n+\ell+\frac{d}{a}-\sum_{i=1}^n\frac{\gamma_i}{a}\ .
\end{split}
\end{equation*}

To simplify notation, let $r$ denote the above virtual dimension.
Since $r$ must be an integer,
$\Mbar_{g,\gamma}(\proj^1[a],\mu)$ is empty unless the parity
condition $d = \sum_{i=1}^n \gamma_i \,(\text{mod }a)$ holds.

\subsection{Hurwitz numbers} \label{xde}
We now impose the
non-negativity condition,
$$d - \sum_{i=1}^n \gamma_i \geq 0.$$
Let $H_{g,a}(\gamma,\mu)$ denote the weighted count of
degree $d$ representable maps from nonsingular, connected,
genus $g$  twisted curves with
stack points of type $\gamma$ to $\proj^1[a]$ with profile $\mu$ over
$\infty$ and simple ramification
 over $r$ fixed points in $\proj^1[a]\setminus
\{0,\infty\}$.

\begin{lemma} \label{ozoz}
$H_{g,a}(\gamma,\mu)$ is well-defined and
equal to $|\text{\em Aut}(\gamma)| \cdot H_g(\gamma_+,\mu)$.
\end{lemma}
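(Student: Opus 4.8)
The plan is to relate two enumerative problems: counting representable maps from connected nonsingular twisted curves with stack points of type $\gamma$ to $\proj^1[a]$ (the count $H_{g,a}(\gamma,\mu)$), and counting ordinary connected Hurwitz covers of $\proj^1$ with ramification $\gamma_+$ over $0$ and $\mu$ over $\infty$ (the count $H_g(\gamma_+,\mu)$). The key observation is that a representable map $f\colon C \to \proj^1[a]$ from a nonsingular twisted curve is, away from the fiber over $0$, just an honest map to $\proj^1$; the stack structure at the $p_i$ encodes ramification data over $0$ in the coarse space. First I would make precise the ``rigidification'' functor: given such an $f$, composing with the coarse moduli map $\proj^1[a] \to \proj^1$ and passing to the coarse curve of $C$ produces a map $\bar f \colon \bar C \to \proj^1$ of degree $d$, nonsingular and connected, simply ramified over the $r$ fixed points of $\proj^1[a]\setminus\{0,\infty\}$. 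The representability hypothesis forces the monodromy at a stack point $p_i$ of order $a_i$ to be a cycle of length exactly $a_i$ (since $f$ induces an injection on isotropy groups $I_i \hookrightarrow \langle \zeta_a\rangle$), so the ramification profile of $\bar f$ over $0$ is a union of cycles each of whose length divides $a$; the number of full $a$-cycles is the number of unramified-in-the-stack-sense preimages, i.e. $\frac{1}{a}(d - \sum \gamma_i)$, and the remaining parts are exactly $\gamma_1,\dots,\gamma_n$. This is precisely the partition $\gamma_+$. Conversely, given a Hurwitz cover with profile $\gamma_+$ over $0$, the $a$-th root stack construction along $0$ canonically produces a twisted curve $C$ with stack points at the preimages of $0$ whose orders are the cycle lengths, and a representable map to $\proj^1[a]$; the $a$-cycles give trivial stack structure (order $1$, no marked point) and the cycles of length $\gamma_i$ give marked stack points of type $\gamma_i$.

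The main content is then a bookkeeping argument about \emph{automorphisms} and the resulting change in the weighting. On the Hurwitz side each cover $\pi$ is weighted by $1/|\mathrm{Aut}(\pi)|$ where $\mathrm{Aut}(\pi)$ fixes $\pi$ commuting with the map but does \emph{not} permute the $r+\ell$ branch points; in particular it does not permute the preimages of $0$. On the stacky side, the moduli problem $H_{g,a}(\gamma,\mu)$ comes with ordered markings $p_1,\dots,p_n$ carrying the prescribed monodromies $\gamma_i$, so an automorphism must fix each $p_i$. The root-stack construction is functorial and its automorphisms match those of $\pi$ that fix the preimages of $0$ pointwise — which is automatic — so $|\mathrm{Aut}|$ is literally the same on both sides. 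The discrepancy is purely combinatorial: a single isomorphism class of Hurwitz cover with profile $\gamma_+$, once we \emph{label} which of the cycles of length equal to a given value is which $p_i$, is hit $|\mathrm{Aut}(\gamma)|$ times (where $\gamma$ is viewed as a partition, so $|\mathrm{Aut}(\gamma)|$ is the number of orderings of the parts of $\gamma$ consistent with the multiset); equivalently, the fiber of the forgetful map from the stacky count to the unordered Hurwitz count over a cover with no extra symmetry among its $0$-preimages has size $|\mathrm{Aut}(\gamma)|$. Summing $1/|\mathrm{Aut}(\pi)|$ over the stacky moduli thus gives $|\mathrm{Aut}(\gamma)|$ times the same sum over Hurwitz covers, which is the claimed identity.

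To organize this cleanly I would: (1) define the two maps $\Phi$ (root stack / rigidification) between the sets of covers, and verify $\Phi$ and its inverse are well-defined — this uses representability to control stack orders at the $p_i$ and the Riemann--Hurwitz count $r_g(\gamma_+,\mu) = r$ to see the simple branch points match up; (2) check that $\Phi$ respects isomorphisms and identifies automorphism groups, so that the only difference is the labeling of the $\gamma$-parts; (3) count the fibers: the stacky moduli ``sees'' an ordered tuple of monodromies while $H_g(\gamma_+,\mu)$ forgets this ordering, contributing the factor $|\mathrm{Aut}(\gamma)|$; (4) conclude that $H_{g,a}(\gamma,\mu)$ is well-defined (independent of the chosen $r$ points, inherited from the corresponding independence of $H_g(\gamma_+,\mu)$) and equals $|\mathrm{Aut}(\gamma)|\cdot H_g(\gamma_+,\mu)$.

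The step I expect to be the main obstacle is (2), the precise matching of automorphism groups under the root-stack correspondence — in particular verifying that no \emph{new} automorphisms appear on the stacky side from the stack structure at nodes or at the $a$-cycle preimages of $0$, and that an automorphism of the twisted curve commuting with $f$ is forced to act trivially on all the added isotropy. This is the point where one must invoke the functoriality and universal property of the root stack construction (as in \cite{AV}) carefully rather than wave hands; everything else is essentially the Riemann--Hurwitz arithmetic already recorded in the excerpt plus elementary combinatorics of labelings.
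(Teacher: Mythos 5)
Your overall strategy --- pass to the coarse map, identify the profile over $0$ as $\gamma_+$, and account for the factor $|\text{Aut}(\gamma)|$ by the labelling of the stack points versus the unlabelled ramification points of the Hurwitz cover --- is exactly the paper's, and your automorphism bookkeeping in steps (2)--(3) matches (and elaborates on) the paper's terser justification. However, there is a genuine error in the key geometric step, the identification of the coarse ramification profile over $0$. You claim that representability forces the coarse map to have a cycle of length exactly $a_i$ (the order of $\gamma_i$ in $\mathbb{Z}_a$) at $p_i$, ``since $f$ induces an injection on isotropy groups,'' and hence that all cycle lengths divide $a$. This is false, and it is inconsistent with your own (correct) subsequent assertion that the remaining parts are $\gamma_1,\dots,\gamma_n$: the coarse ramification index at $p_i$ is the integer $\gamma_i\in\{1,\dots,a-1\}$ itself, which in general neither equals $a_i=a/\gcd(\gamma_i,a)$ nor divides $a$. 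For instance, for $a=2$ and $\gamma_i=1$ the isotropy group at $p_i$ has order $2$ but the coarse map is \emph{unramified} there --- this is visible in the paper's $\mathbb{Z}_2$ example, where $\gamma=(1,1)$, $\mu=(1,1)$ corresponds to double covers of $\proj^1$ unramified over both $0$ and $\infty$; for $a=4$ and $\gamma_i=3$ the isotropy has order $4$ while the ramification index is $3$, which does not divide $4$.

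The point is that injectivity on isotropy groups determines only the order $a_i$ of the stack structure at $p_i$, not the local degree of the coarse map; the latter is governed by the character through which $I_i$ acts on $f^*T_{0,\proj^1[a]}$, i.e.\ by $\gamma_i$ itself. The paper supplies this step by invoking Cadman's classification of maps to root stacks (Theorem 3.3.6 of \cite{C}), which says precisely that the representable maps with stack data $\gamma$ correspond to coarse maps satisfying $(f^c)^*[0]=\sum_i\gamma_i[\pbar_i]+aD$ with $D$ a reduced divisor of length $\frac{1}{a}(d-\sum_i\gamma_i)$, together with Lemma 7.2.6 of \cite{AV} for the existence and uniqueness of the twisted structure in the converse direction. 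You need one of these inputs (or an explicit local computation in the chart $[\com/\<\zeta_a\>]$ of $\proj^1[a]$ near $0$) in place of the isotropy-order argument; once that is corrected, the rest of your proposal goes through essentially as in the paper.
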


Given a stack map $[f:C\rightarrow \proj^1[a]]
\in \M_{g,\gamma}(\proj^1[a],\mu)$
satisfying the simple ramification condition over the $r$ points,
the associated coarse  map
$$f^c:C^c \rightarrow \proj^1$$
is a
usual Hurwitz covering counted by $H_g(\gamma_+,\mu)$.
The representability condition implies the point $p_i$ has ramification
profile $\gamma_i$ over 0 for the coarse map.
Conversely, we have the following result.

\begin{lemma}
Let $C^c$ be a nonsingular curve and let
$f^c: C^c\to \proj^1$ be a nonconstant map.
Then, there is a unique (up to isomorphism) twisted curve $(C,p_1,\ldots,p_m)$
and a representable morphism $f:C\to \proj^1[a]$ whose
induced map between coarse curves is $f^c$.
\end{lemma}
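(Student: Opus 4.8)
The plan is to construct the twisted curve $C$ locally over each branch point of $f^c$ lying above $0\in\proj^1$ by a root-stack construction, and then glue. First, I would recall that $\proj^1[a]$ is the $a$-th root stack of $\proj^1$ along the divisor $0$; giving a representable map $f:C\to\proj^1[a]$ is, by the universal property of root stacks, the same as giving a line bundle $L$ on $C$ together with a section $s$ and an isomorphism $L^{\otimes a}\cong (f^c)^*\sO_{\proj^1}(0)$ carrying $s^{\otimes a}$ to $(f^c)^*(x_0)$, where $x_0$ is the tautological section cutting out $0$. Away from $(f^c)^{-1}(0)$, the divisor $(f^c)^*(0)$ is trivial and there is nothing to do: $C$ agrees with $C^c$ and $f=f^c$ there. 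So the entire construction is concentrated at the finitely many points of $C^c$ mapping to $0$.

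The heart of the argument is the local analysis at a point $p\in C^c$ with $f^c(p)=0$ of ramification order $b$ (i.e.\ $(f^c)^*(0)=\sum b_i p_i$ near the fiber). In a local coordinate $t$ at $p$ and $z$ at $0\in\proj^1$, the map is $z=t^b\cdot(\text{unit})$. I would take the twisted curve to be, locally, the quotient stack $[\,\mathrm{Spec}\,\com[u]\,/\,\mu_b\,]$ with $u^b=t$ (the standard order-$b$ orbifold chart), and define $f$ in the chart by $u\mapsto u$ composed with the quotient $\com/\mu_b\to\com/\mu_a$ coming from $\mu_b\hookrightarrow\mu_a$ — this requires $b\mid a$ for representability, which forces the isotropy order, hence $b$, to equal the prescribed local data; more precisely the isotropy group of $p_i$ in $C$ is exactly $\mu_b\subset\mu_a$, so $b$ is determined by $f^c$ (it is the ramification index), and $m$ is the number of such points. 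One then checks this local model indeed produces the root-stack data $(L,s)$ with $L^{\otimes a}=(f^c)^*\sO(0)$ locally, and that the induced coarse map is $z=t^b\cdot(\text{unit})$, i.e.\ $f^c$. Representability is precisely the condition that the map on isotropy groups $\mu_b\to\mu_a$ is injective, which it is by construction.

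For uniqueness, I would argue that any representable $f:C\to\proj^1[a]$ with coarse map $f^c$ must have $C$ with trivial stack structure away from $(f^c)^{-1}(0)$ (since $\proj^1[a]$ is an honest scheme there and $C\to C^c$ is an isomorphism over the locus where $C$ is a scheme, while representability plus the étale-locally-schematic nature of the target off $0$ kills any extra gerbe/isotropy), and at a point over $0$ the representability condition pins down the isotropy group to be cyclic of order equal to the ramification index $b$, with a specified generator acting on $T_pC$; the root-stack universal property then shows $(C,f)$ is the one constructed above, uniquely up to unique isomorphism. Assembling the local pictures via descent along the étale cover $C^c\setminus(f^c)^{-1}(0)\,\sqcup\,(\text{discs around the }p_i)$ gives the global $(C,p_1,\dots,p_m,f)$.

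I expect the main obstacle to be the bookkeeping at the points over $0$: verifying carefully that the local orbifold chart is forced (not merely allowed) by representability, that the gluing of the trivial part with the root-stack discs is canonical, and that no spurious stack structure can appear at other points of $C^c$ (in particular at nodes — but here $C^c$ is nonsingular, so that case does not arise, which simplifies matters). The cleanest route is to invoke the universal property of the root stack $\sqrt[a]{(\proj^1,0)}$ directly: a representable morphism from a twisted curve $C$ to it is equivalent to an $a$-th root of the divisor $(f^c)^*(0)$ on $C$ together with the requirement that $C$ be the minimal orbifold making this root Cartier — and such a minimal root exists and is unique, giving both existence and uniqueness at once. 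This reduces everything to the standard theory of root stacks of Deligne–Mumford curves as in \cite{AV, AGV}.
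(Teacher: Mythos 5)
Your overall architecture (reduce to the complement of $(f^c)^{-1}(0)$, then handle the points over $0$ by root-stack theory) is sound, and your closing remark --- that one should simply invoke the universal property of the root stack, or equivalently the extension theory of \cite{AV,C} --- is essentially what the paper does: its entire proof is to observe that $\proj^1[a]\to\proj^1$ is an isomorphism away from $[0/\mathbb{Z}_a]$, so $f^c$ already defines a map on $C^c\setminus (f^c)^{-1}(0)$, and then to cite Lemma 7.2.6 of \cite{AV}, which extends a map from the complement of finitely many points of a nonsingular curve uniquely to a representable map from a twisted curve. (The identification of the resulting local structure is deferred to Cadman's Theorem 3.3.6, quoted immediately after the Lemma.)

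However, the explicit local model at the heart of your existence argument is wrong as stated. At a point $p$ with ramification index $b$ over $0$ (so $z=t^b\cdot(\text{unit})$), there is no requirement that $b\mid a$, and the isotropy group forced by representability is \emph{not} $\mu_b$: it is the cyclic group of order $e=a/\gcd(a,b)$, embedded in $\mu_a$ as the subgroup generated by $\zeta_a^b$ (this is exactly the paper's statement that the isotropy at $p_i$ is the subgroup of $\mathbb{Z}_a$ generated by $\gamma_i$; for instance $b=a$ gives trivial isotropy, and $a=4$, $b=3$ gives isotropy $\mu_4$, while $\mu_3\not\subset\mu_4$). The correct chart is $u^{e}=t$ with the map to $[\com/\mu_a]$ given by $w=u^{b/\gcd(a,b)}$, where $w^a=z$; one checks the generator of $\mu_e$ acts on $w$ by $\zeta_a^{b}$, so the map on isotropy is injective and the coarse map is $z=w^a=u^{eb}=t^b$ as required. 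Your stated model ($u^b=t$ with $w=u$) instead produces coarse ramification $a/b$, inconsistent with the ramification index $b$ you assign to it, so the verification ``that the induced coarse map is $z=t^b$'' would fail. Since the rest of your argument (uniqueness via the pinned-down isotropy, gluing) leans on this local computation, the error propagates; it is repairable by substituting the correct local data, or avoided entirely by the paper's route of extending from the punctured curve.
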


\begin{proof}
 Since the natural map $\proj^1[a]\to \proj^1$ is an isomorphism
over $\proj^1[a]\setminus [0/\mathbb{Z}_a]$, we may consider the composite
$$C^c\setminus  (f^c)^{-1}(0)\overset{f^c}{\longrightarrow}\proj^1
\setminus \{0\}\overset{\sim}{\longrightarrow} \proj^1[a]\setminus
\{[0/\mathbb{Z}_a]\}\subset \proj^1[a].$$ The Lemma follows by applying
Lemma 7.2.6 of \cite{AV}.
\end{proof}

To proceed, we need to identify the ramification profile of $f^c$ over $0$.
Since $\proj^1[a]$ is a root stack, we may use classification results
on maps to root stacks proven in \cite{C}.
According to  Theorem 3.3.6 of \cite{C}, maps considered
in our stack Hurwitz problem are in bijective
correspondence with maps $f^c: C^c\to \proj^1$ from a
coarse curve $C^c$ satisfying
\begin{equation}\label{ramification_type}
(f^c)^*[0]=\sum_{i=1}^n \gamma_i [\bar{p}_i]+ a D,
\end{equation}
where $\bar{p}_1,...,\bar{p}_n\in C^c$ are distinct points
 and $D\subset C^c$ is a divisor consisting of
$\frac{d-\sum_{i=1}^n \gamma_i}{a}$ additional distinct points.

The proof of Lemma \ref{ozoz} is complete. The factor $|\text{Aut}(\gamma)|$
occurs since the stack points of $C$ are labelled while the
corresponding ramification points on the Hurwitz covers enumerated
by $H_g(\gamma_+,\mu)$ are not. $\Box$

\subsection{Branch maps}
There exists a basic branch morphism for stable maps,
$$\text{br}: \Mbar_{g}(\proj^1,\mu)\to \text{Sym}^{2g-2+d+\ell}(\proj^1),$$
constructed in \cite{FP}. By composing with the coarsening
 map, we obtain
$$\text{br}: \Mbar_{g,\gamma}(\proj^1[a],\mu)\to \text{Sym}^{2g-2+d+\ell}
(\proj^1).$$ To proceed, we impose the boundedness condition,
$$\forall i \neq j , \  \ \gamma_i + \gamma_j \leq a.$$

\begin{lemma}\label{HHH}
If the parity, non-negativity, and boundedness conditions are satisfied,
$$\text{\em Im}(\text{\em br})
\subset \left(d-n-\frac{d-\sum_{i=1}^n \gamma_i}{a}\right)[0]
+ \text{\em Sym}^r(\proj^1) \subset
\text{\em Sym}^{2g-2+d+\ell}(\proj^1).$$
\end{lemma}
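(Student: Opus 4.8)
The plan is to compute, for an arbitrary stable relative map $[f:C\to\proj^1[a]]$ in the moduli space, the multiplicity of $0$ in the branch divisor $\mathrm{br}(f)$, and to show it is at least $d-n-\frac{d-\sum\gamma_i}{a}$; since the total degree of the branch divisor is $2g-2+d+\ell$ and $r=2g-2+n+\ell+\frac{d}{a}-\sum\frac{\gamma_i}{a}$, the remaining degree $\big(2g-2+d+\ell\big)-\big(d-n-\frac{d-\sum\gamma_i}{a}\big)$ is exactly $r$, which gives the claimed containment. So the whole statement reduces to a local analysis of the branch contribution over $0$.

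First I would reduce to the case where $C$ is nonsingular and $f$ is nonconstant on every component mapping to $\proj^1[a]$ nontrivially: the branch morphism is constructed in \cite{FP} so as to be compatible with degenerations, and contracted components or nodal degenerations only move branch points together (increasing, never decreasing, multiplicities over $0$), so it suffices to bound the multiplicity for the "generic" stratum and invoke that the image is closed. On such a stratum, by the classification in Lemma (the second one, via \cite{AV,C}) together with \eqref{ramification_type}, the coarse map $f^c:C^c\to\proj^1$ has $(f^c)^*[0]=\sum_{i=1}^n\gamma_i[\bar p_i]+aD$ with $D$ a reduced divisor of $\frac{d-\sum\gamma_i}{a}$ points. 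Then I would compute the order of vanishing of the branch divisor of $f^c$ at $0$ purely in terms of this ramification profile: a point of ramification index $e$ over $0$ contributes $e-1$ to the branch divisor. Summing, the local branch contribution over $0$ for the coarse map is
$$
\sum_{i=1}^n(\gamma_i-1)\;+\;\Big(\tfrac{d-\sum_{i=1}^n\gamma_i}{a}\Big)(a-1)
\;=\;d-n-\tfrac{d-\sum_{i=1}^n\gamma_i}{a}.
$$
Here the non-negativity condition guarantees the term $\frac{d-\sum\gamma_i}{a}$ is a genuine non-negative integer, and boundedness ($\gamma_i+\gamma_j\le a$) is what ensures the points $\bar p_i$ and the points of $D$ are forced to be \emph{distinct} — without it two of the $\gamma_i$'s could be "absorbed" into a single fiber point of index up to $a$, changing the bookkeeping — so that the displayed sum is exactly the multiplicity rather than merely a lower bound on a stratum-by-stratum basis.

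Then I would assemble: on the open stratum the multiplicity of $0$ in $\mathrm{br}(f)$ equals $d-n-\frac{d-\sum\gamma_i}{a}$ on the nose (the only branching over $0$ comes from the fiber structure just described, all other ramification being over $\proj^1[a]\setminus\{0,\infty\}$, and $\infty$ contributes via $\mu$ to the complementary $\mathrm{Sym}^r$ factor), and the residual divisor has degree $r$; by closedness of $\mathrm{Im}(\mathrm{br})$ this persists on all of $\Mbar_{g,\gamma}(\proj^1[a],\mu)$, with the multiplicity over $0$ only able to go up under degeneration, hence the stated inclusion into $(d-n-\tfrac{d-\sum\gamma_i}{a})[0]+\mathrm{Sym}^r(\proj^1)$. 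The main obstacle I anticipate is the bookkeeping over the boundary: one must be sure that when a component of $C$ is contracted or a node appears, no branching "escapes" from $0$ to a point of $\proj^1[a]\setminus\{0,\infty\}$ in a way that would violate the inclusion — i.e. that the multiplicity at $0$ is semicontinuous in the right direction. This should follow from the explicit construction of $\mathrm{br}$ in \cite{FP} (the branch divisor is cut out by a section whose vanishing at $0$ is controlled by the ramification profile of the coarse special fiber, and special fibers only acquire more ramification under specialization), but checking it carefully — rather than just on the generic stratum — is where the real work lies.
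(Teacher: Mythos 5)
Your computation on the open stratum is correct, and the degree bookkeeping (that the residual divisor has degree exactly $r$) matches the paper's. But the reduction of the general case to that stratum is a genuine gap. You propose to get the bound on degenerate maps by semicontinuity, arguing that the multiplicity of $0$ in $\mathrm{br}(f)$ can only increase under specialization and invoking closedness of the image. That argument only controls maps lying in the \emph{closure} of the locus of maps with nonsingular domain and no contraction over $0$. The moduli space $\Mbar_{g,\gamma}(\proj^1[a],\mu)$ is in general not irreducible: there are components consisting entirely of maps in which a positive-dimensional piece of the domain (carrying some of the stack points $p_i$) is contracted to $[0/\mathbb{Z}_a]$, and such components need not meet the closure of the generic stratum at all. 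For those, no one-parameter smoothing to the open stratum exists, so semicontinuity gives nothing and the multiplicity of $0$ must be computed directly. This is exactly what the paper's proof does: for a genus~$0$ bubble over $0$ carrying stack points $p_1,\dots,p_l$ and attached at $m$ nodes of types $\delta_1,\dots,\delta_m$ with $\sum_{i\le l}\gamma_i-\sum_j\delta_j=ka$, the branch contribution over $0$ is at least $E'=E+l+m-2-k$, and one checks $E'>E$ using $m\ge 1$, $l+m\ge 3$, and $k\le l-2$.

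This also corrects your account of where boundedness enters. On the open stratum the points $\bar p_i$ and the support of $D$ are distinct by the very definition of a representable map to the root stack (Theorem 3.3.6 of \cite{C}); boundedness plays no role there. Its actual job is in the bubbling analysis just described: $\gamma_i+\gamma_j\le a$ for all $i\ne j$ forces $\sum_{i=1}^l\gamma_i\le (l-1)a-(l-2)$, hence $k\le l-2$, which is what makes $E'>E$ when $k>0$. Without boundedness the inequality can fail and the lemma is false, so any correct proof must confront the degenerate loci head-on rather than deferring them to a continuity argument. You flagged this boundary bookkeeping as ``where the real work lies,'' and that assessment is right — but the mechanism you propose for it does not close the gap.
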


\begin{proof} Let $f:C\rightarrow \proj^1[a]$ be
a Hurwitz cover counted by $H_{g,a}(\gamma,\mu)$.
The expression $$E=d-n-\frac{d-\sum_{i=1}^n \gamma_i}{a}$$
is the order of $[0]$ in $\text{br}([f])$.
The claim of the Lemma is simply that the
minimum order of $[0]$ in $\text{br}(f)$ is achieved at
such Hurwitz covers $f$.

The proof requires checking all possible degenerations
of $f$ over 0. If the stack points $p_1,\ldots,p_n$
do not bubble off the domain, the claim follows easily as
in the coarse case.
We leave the details to the reader.

A more interesting calculus is encountered if
a subset of stack points $p_1,\ldots, p_l$ bubbles off the
domain together over $[0/\mathbb{Z}_a]\in \proj^1[a]$.
We do the analysis for a single bubble.
We can assume the bubble is of genus 0 since higher
genus increases the branching order.
The multi-bubble calculation is identical.

The genus 0
bubble is attached to the rest of the curve in $m$ stack points
of type
$$\delta_1,\ldots,\delta_m \in \mathbb{Z}_a, \ \ 1\leq \delta_j \leq a$$
on the noncollapsed side.
The parity condition
\begin{equation}\label{nnhh}
\sum_{i=1}^l \gamma_i - \sum_{j=1}^m \delta_j = ka
\end{equation}
must be satisfied with $k \in \mathbb{Z}$.

The branch contribution over
0 of the bubbled map is at least
$$E'=\sum_{i=l+1}^n (\gamma_i-1) + \sum_{j=1}^m (\delta_j-1) + 2m-2 +
\frac{d- \sum_{i=l+1}^n \gamma_i -\sum_{j=1}^m \delta_j}{a}(a-1).$$
All the terms on the right are obtained from the ramifications
on the noncollapsed side except for the $2m$ from the
$m$ nodes of the bubble and the $-2$ from the bubble itself, see \cite{FP}.
Rewriting using the parity condition \eqref{nnhh}, we find
$$E'= E +l +m-2-k.$$
By connectedness and bubble stability, we have
 $$m\geq 1, \ \ l+m\geq 3.$$
If $k\leq 0$, we conclude $E'>E$.
If $k\geq 0$, then $k\leq l-2$ by the boundedness condition
and the positivity of $\delta_1$. Again, $E'>E$.
\end{proof}

By Lemma \ref{HHH}, we may view the branch map with restricted image,
$$\text{br}_0: \overline{\M}_{g,\gamma}(\proj^1[a],\mu)
 \rightarrow \text{Sym}^r(\proj^1).$$
The proof of Lemma \ref{HHH} shows the maps
$f:C\rightarrow \proj^1[a]$ satisfying
$[0]\notin \text{br}_0(f)$
have no contraction over $0$ and coarse profile exactly $\gamma_+$.
The usual nonsingularity and Bertini arguments \cite{FP}
then imply the following result.

\begin{lemma} \label{gew}
If the parity, non-negativity, and boundedness conditions are satisfied,
$$H_{g,a}(\gamma,\mu)=\int_{[\overline{\M}_{g,\gamma}(\proj^1[a],\mu)]^{vir}}
\text{\em br}_0^*(H^r),$$
where $H\in H^2(\text{\em Sym}^r(\proj^1),\mathbb{Q})$ is the
hyperplane class.
\end{lemma}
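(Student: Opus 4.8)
The plan is to combine the previous three lemmas with a standard intersection-theoretic argument on the branch map. By Lemma \ref{ozoz}, the quantity $H_{g,a}(\gamma,\mu)$ equals $|\text{Aut}(\gamma)|\cdot H_g(\gamma_+,\mu)$, so it counts (with automorphism weights) the honest, nonsingular, connected Hurwitz covers $f:C\to\proj^1[a]$ with profile $\gamma$ over $0$, profile $\mu$ over $\infty$, and simple ramification over $r$ prescribed points of $\proj^1[a]\setminus\{0,\infty\}$. The strategy is to realize this count as the number of points in the fiber of $\text{br}_0$ over a generic reduced divisor in $\text{Sym}^r(\proj^1)$, and then to express that fiber cardinality as $\int_{[\Mbar_{g,\gamma}(\proj^1[a],\mu)]^{vir}}\text{br}_0^*(H^r)$, exactly as is done in \cite{FP} for the coarse case.

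First I would fix a reduced divisor $\sum_{i=1}^r q_i \in \text{Sym}^r(\proj^1)$ with all $q_i$ distinct and disjoint from $0,\infty$; this is a generic point, and $H^r\in H^{2r}(\text{Sym}^r(\proj^1),\mathbb{Q})$ is Poincaré dual to it. Next I would argue that $\text{br}_0^{-1}(\sum q_i)$ is a finite set of reduced points of $\Mbar_{g,\gamma}(\proj^1[a],\mu)$ lying in the locus where the virtual class is the ordinary fundamental class: by Lemma \ref{HHH} and the remark following it, any $f$ in this fiber has $[0]\notin\text{br}_0(f)$, hence no contracted components over $0$ and coarse profile exactly $\gamma_+$; a dimension count (the source of $\Mbar$ has dimension $r$, branch points impose $r$ independent conditions) together with the Bertini/generic-smoothness arguments of \cite{FP} shows the map $f$ is then an unramified-over-$0$, simply-branched-elsewhere cover from a \emph{nonsingular} curve, i.e. precisely a cover counted by $H_{g,a}(\gamma,\mu)$, and that $\text{br}_0$ is unramified there so the scheme-theoretic fiber is reduced. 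The stacky automorphisms of $f$ are accounted for exactly by the $1/|\text{Aut}(\pi)|$ and $|\text{Aut}(\gamma)|$ weights already built into $H_{g,a}(\gamma,\mu)$ via Lemma \ref{ozoz}. Then $\int_{[\Mbar_{g,\gamma}(\proj^1[a],\mu)]^{vir}}\text{br}_0^*(H^r)$ equals the weighted cardinality of this fiber, which is $H_{g,a}(\gamma,\mu)$.

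The two technical points I would need to nail down carefully, and which I expect to be the main obstacle, are (a) that $[\Mbar_{g,\gamma}(\proj^1[a],\mu)]^{vir}$ agrees with the ordinary fundamental class in a neighborhood of the generic $\text{br}_0$-fiber — this follows because over the open locus of maps with smooth source, unramified over $0$, and simple branching, the obstruction space $H^1(C, f^*T_{\proj^1[a]}(-\infty))$ vanishes, so the moduli stack is smooth of the expected dimension there; and (b) the properness/finiteness needed to make ``integrate $\text{br}_0^*(H^r)$'' literally compute a point count, which is where one invokes that $\text{br}_0$ is proper (as a composition of the proper branch morphism of \cite{FP} with a closed immersion of symmetric products coming from Lemma \ref{HHH}) and pulls back the class of a generic reduced zero-cycle. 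Once these are in place the identity is immediate, and the genericity independence statement for $H_{g,a}$ follows from the fact that $H$, being a cohomology class, does not see the location of the points. This is word-for-word the argument of \cite{FP,GV1} adapted through the coarsening map, so I would keep the write-up short and cite those sources for the analytic details.
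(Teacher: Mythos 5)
Your proposal is correct and takes essentially the same approach as the paper: restrict the branch map to $\text{Sym}^r(\proj^1)$ via Lemma \ref{HHH}, observe that any $f$ with $[0]\notin \text{br}_0(f)$ has no contraction over $0$ and coarse profile exactly $\gamma_+$, and then invoke the nonsingularity and Bertini arguments of \cite{FP}. The paper compresses this into three sentences and cites \cite{FP} for precisely the transversality and virtual-class details you spell out.
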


\section{Localization}
\subsection{Fixed loci}
\label{flo}
The standard $\com^*$-action on $\proj^1$, defined
by $\xi\cdot [z_0,z_1]=[ z_0,\xi z_1]$, lifts canonically to
$\com^*$-actions on $\proj^1[a]$ and
$\overline{\M}_{g,\gamma}(\proj^1[a],\mu)$ .
We will evaluate the integral
\begin{equation}\label{lq23}
\int_{[\overline{\M}_{g,\gamma}(\proj^1[a],\mu)]^{vir}}
\text{br}_0^*(H^r)
\end{equation}
by virtual localization
for relative maps \cite{GP,GV2} following \cite{FP,GV1}.
We assume the parity, non-negativity, and boundedness conditions.

The first step is to define a lift of the  $\com^*$-action to the
integrand. Certainly the $\com^*$-action lifts canonically to
$\text{Sym}^r(\proj^1)$. A lift of $H^r$ can be defined by choosing
the $\com^*$-fixed point  $r[0]\in \text{Sym}^r(\proj^1)$.
The tangent weights at
$[0/\mathbb{Z}_a],\infty\in \proj^1[a]$ are
 $\frac{t}{a}$ and $-t$ respectively.
The equivariant Euler class of the normal bundle to $r[0]$ in
$\text{Sym}^r(\proj^1)$
has weight
$r!t^r$.

The second step is to identify the $\com^*$-fixed locus
$\overline{\M}_{g,\gamma}(\proj^1[a],\mu)^{\com^*} \subset
\overline{\M}_{g,\gamma}(\proj^1[a],\mu)$.
The components of  the $\com^*$-fixed locus
lie over the $r+1$ points of $\text{Sym}^r(\proj^1)^{\com^*}$.
By our lifting of $H^r$, we need only consider
$$\overline{\M}^{\com^*}_0 =
\overline{\M}_{g,\gamma}(\proj^1[a],\mu)^{\com^*}
\cap \text{br}_0^{-1}(r[0]).$$
Because of the strong restriction on the branching, the maps
$$[f:C \rightarrow \proj^1[a]]\in \overline{\M}_0^{\com^*}$$
have a very simple structure:
\begin{enumerate}
\item[(i)]
$C=C_0\cup\coprod_{j=1}^\ell C_j$, \\
\item[(ii)]
$f|_{C_0}$ is a constant map from a genus $g$ curve
to $[0/\mathbb{Z}_a]\in \proj^1[a]$, \\
\item[(iii)]
the coarse map $f^c|_{C_j}:C^c_j\to \proj^1$ is
a $\com^*$-fixed Galois cover of degree $\mu_j$ for $j>0$, \\
\item[(iv)]
$C_0$ meets $C_j$ at a node $q_j$.
\end{enumerate}
The stack structure at $q_j\in  C_j$ is easily
  determined using the relationship between stack Hurwitz covers
of $\proj^1[a]$ and ordinary Hurwitz covers of $\proj^1$
discussed in Section \ref{xde}.
The  stack structure at $q_j\in C_j$ is
of type $\mu_j \in \mathbb{Z}_a$.
The stack structure at  $q_j\in C_0$ where  $C_j$
is attached is of the {\em opposite} type  $-\mu_j\in \mathbb{Z}_a$.
The map
$$f|_{C_0} : (C,p_1,\ldots,p_n, q_1, \ldots, q_\ell) \rightarrow [0/\mathbb{Z}_a]$$
is an element of $\overline{\M}_{g,\gamma-\mu}(\B \mathbb{Z}_a)$.

The $\com^*$-fixed locus may be identified
with a quotient of a fibered product,
$$\overline{\M}_0^{\com^*}\stackrel{\sim}{=}
\Big( \overline{\M}_{g,\gamma-\mu}(\B \mathbb{Z}_a)
\times_{(\bar{I}\B\mathbb{Z}_a^{\ell})}
P_1\times...\times P_{\ell}\Big)_{/\text{Aut}(\mu)},$$
where
$\bar{I}\B \mathbb{Z}_a$ is the rigidified inertia stack of $\B\mathbb{Z}_a$
and
$P_j$ is the moduli stack of $\com^*$-fixed Galois covers of
degree $\mu_j$.
By the standard multiplicity obtained from gluing stack
$\mathbb{Z}_a$-bundles,
the projection
\begin{equation}\label{ffor}
\overline{\M}_0^{\com^*} \rightarrow
\Big(\overline{\M}_{g,\gamma-\mu}(\B \mathbb{Z}_a)
\times
P_1\times...\times P_{\ell}\Big)_{/\text{Aut}(\mu)}
\end{equation}
has degree $\prod_{j=1}^\ell \frac{a}{b_j}$ where $b_j$ is the order
of $\mu_j\in \mathbb{Z}_a$.

Fortunately, the residue integral over $\overline{\M}_0^{\com^*}$
 in the virtual localization
formula for \eqref{lq23} is pulled-back via \eqref{ffor}.
Instead of integrating over $\overline{\M}_0^{\com^*}$,  we will
integrate over
$$\widetilde{\M}^{\com^*}_0=\overline{\M}_{g,\gamma-\mu}(\B \mathbb{Z}_a)
\times
P_1\times...\times P_{\ell}$$
and multiply by
$$\frac{1}{|\text{Aut}(\mu)|} \prod_{j=1}^\ell \frac{a}{b_j}.$$

\subsection{Virtual normal bundle}
The virtual localization formula for \eqref{lq23} with our
choice of equivariant lifts takes the following form:
\begin{equation}\label{l23}
\int_{[\overline{\M}_{g,\gamma}(\proj^1[a],\mu)]^{vir}}
\text{br}_0^*(H^r)  = \frac{1}{|\text{Aut}(\mu)|}
\prod_{j=1}^\ell \frac{a}{b_j}
\int_{\widetilde{\M}^{\com^*}_0} \frac{r!\ t^r}{e(\text{Norm}^{vir})}.
\end{equation}
The equivariant Euler class of
the virtual normal bundle is
\begin{equation}\label{nnrrmm}
\frac{1}{e(\text{Norm}^{vir})} =
\frac{e(H^1(C,f^*T_{\proj^1[a]}(-\infty)))}
     {e(H^0(C,f^*T_{\proj^1[a]}(-\infty)))}
\frac{1}{\prod_{j=1}^\ell e(N_j)},
\end{equation}
see \cite{GP}. The last product is over the nodes of $C$,
and $N_j$ is the equivariant line bundle associated to the
smoothing of $q_j$. The terms in \eqref{nnrrmm} are
computed  via the normalization sequence of the domain
$C$. The various contributions over the components
$C_0,C_1, \ldots, C_\ell$ are computed separately.

First consider the collapsed component $C_0$. The space
$H^0(C_0, f|_{C_0}^*T_{\proj^1[a]}(-\infty))$
is identified with the subspace of $T_{\proj^1[a]}(-\infty)
|_{[0/\mathbb{Z}_a]}$ consisting of vectors invariant under the
action of the image of the monodromy representation
$\pi_1^{orb}(C_0)\to \mathbb{Z}_a$.
Therefore, $H^0$ vanishes unless the monodromy representation is
trivial, in which case $H^0$ is 1-dimensional with weight $\frac{t}{a}$.

The trivial monodromy representation $\pi_1^{orb}(C_0)\to \mathbb{Z}_a$
is possible only if
$$\gamma=\emptyset \ \ \text{ and } \ \ \forall j, \
 \mu_j = 0\ \ \text{mod} \  a\ .$$
Even then, the locus with trivial monodromy is just a
component{\footnote{If $g>0$, there will typically be other
components as well.}} of
$\overline{\M}_{g,(0,\ldots,0)}(\B \mathbb{Z}_a)$.
The trivial monodromy representation locus will play a
slightly special role throughout the calculation. But, in the
final formula, no different treatment is required.

The space $H^1(C_0, f|_{C_0}^*T_{\proj^1[a]}(-\infty))$ yields
the vector bundle
$$\mathbb{B}=(\bE^{U})^\vee$$ over $\overline{\M}_{g,\gamma-\mu}
(\B\mathbb{Z}_a)$ whose rank  may be calculated by
the orbifold Riemann-Roch formula.
Over the component of the fixed locus where the
monodromy representation $\pi_1^{orb}(C_0)\to \mathbb{Z}_a$ is trivial,
the rank of $\mathbb{B}$ is $g$. Otherwise, the rank is
\begin{equation}\label{jjo}
r_\mathbb{B}=g-1+\sum_{i=1}^n \frac{\gamma_i}{a}+
\sum_{\mu_j\neq 0\ \text{mod}\ a}\left(1-\<\frac{\mu_j}{a}\>\right).
\end{equation}

The $H^1-H^0$ contribution from the collapsed component
to the localization formula is
\begin{equation}\label{vew}
\sum_{i=0}^{r_{\mathbb{B}}}\left(\frac{t}{a}\right)^{r_{\mathbb{B}}-i}
c_i(\mathbb{B}) =
\sum_{i= 0}^{r_{\mathbb{B}}} \left(\frac{t}{a}\right)^{r_{\mathbb{B}}-i}
 (-1)^i\lambda^{U}_i.
\end{equation}
For the component where the monodromy representation is trivial,
an additional factor of $\frac{a}{t}$ must be inserted in \eqref{vew}.

Next consider the $H^1-H^0$
contribution from the
$\com^*$-fixed Galois covers.
 Since
$$\text{deg}(f|_{C_j}^*T_{\proj^1[a]}(-\infty))=\frac{\mu_j}{a},$$
 we have
$$H^k(C_j, f|_{C_j}^*T_{\proj^1[a]}(-\infty))=
H^k\left(\proj^1,\sO_{\proj^1}\left(\left\lfloor
\frac{\mu_j}{a}\right\rfloor\right)\right).$$
The $H^0$ weights
are $$\frac{t}{\mu_j},2\frac{t}{\mu_j},...
,\left\lfloor\frac{\mu_j}{a}\right\rfloor\frac{t}{\mu_j},$$
where the weight 0 is omitted.{\footnote{The 0 weight
is from reparameterization of the domain $C_j$ and is
not in the virtual normal bundle.}}
The group $H^1$ vanishes.
The $H^1-H^0$ contribution is
\begin{equation*}
t^{-\left\lfloor\frac{\mu_j}{a}\right\rfloor}
\frac{\mu_j^{\left\lfloor\frac{\mu_j}{a}\right\rfloor}}
{\left\lfloor\frac{\mu_j}{a}\right\rfloor!}\ .
\end{equation*}

Finally, consider the $H^1-H^0$ contribution
from the nodal point $q_j$.
If $\mu_j\neq 0
\,(\text{mod }a)$, then $q_j$ is a stack point
and
$$H^0(q_j, f^*T_{\proj^1[a]}(-\infty)|_{q_j})=0$$
 as there is no invariant section.
If $\mu_j = 0 \,(\text{mod }a)$ then
$H^0(q_j, f^*T_{\proj^1[a]}(-\infty)|_{q_j})$ is 1-dimensional and
contributes a factor $\frac{t}{a}$. Certainly, $H^1$ vanishes here
for dimension
reasons.

The contribution from smoothing the node $q_j$ is
the tensor product of the tangent lines of the two branches
incident to $q_j$,
\begin{equation*}
e(N_j)=\frac{1}{b_j}\left(-\bar{\psi}_j+\frac{t}{\mu_j}\right).
\end{equation*}

After putting the component calculations together in \eqref{nnrrmm},
we obtain the following expression for
for $1/e(\text{Norm}^{vir})$:
$$\left(\sum_{i= 0}^{r_{\mathbb{B}}}
\left(\frac{t}{a}\right)^{r_\mathbb{B}-i}
(-1)^i \lambda_i^{U}\right)\cdot
\prod_{j=1}^\ell
\left(
t^{-\left\lfloor\frac{\mu_j}{a}\right\rfloor}
\frac{\mu_j^{\left\lfloor\frac{\mu_j}{a}\right\rfloor}}
{\left\lfloor\frac{\mu_j}{a}\right\rfloor!}
\frac{1}{\frac{1}{b_j}\left(-\bar{\psi}_j+\frac{t}{\mu_j}\right)}\right)
\cdot \prod_{j=1}^\ell
\left(\frac{t}{a}\right)^{\delta_{0,\<\frac{\mu_j}{a}\>}}.$$
Regrouping of terms yields
\begin{equation}\label{monn}
\frac{\prod_{j=1}^\ell
b_j\mu_j}{a^{r_{\mathbb{B}}+\sum_{j=1}^{\ell}
\delta_{0,\<\frac{\mu_j}{a}\>}}}
\left(\prod_{j=1}^\ell
\frac{\mu_j^{\left\lfloor\frac{\mu_j}{a}\right\rfloor}}
{\left\lfloor\frac{\mu_j}{a}\right\rfloor!}\right)
\left(\sum_{i=0}^{r_{\mathbb{B}}} t^{r_{\mathbb{B}}-i} (-a)^i
\lambda_i^{U}\right)
\cdot t^{-\sum_{j=1}^\ell
 \left\lfloor\frac{\mu_j}{a}\right\rfloor}
\prod_{j=1}^\ell \frac{t^{\delta_{0,\<\frac{\mu_j}{a}\>}}}
{(t-\mu_j\bar{\psi}_j)}.
\end{equation}
For the component with trivial  monodromy representation,
a factor of $\frac{a}{t}$ must be inserted in the formulas
for $1/e(\text{Norm}^{vir})$.

\label{ggg}
\subsection{Proof of Theorem \ref{vvv}}
Putting the calculations of Section \ref{ggg}
 together and passing to the non-equivariant limit, we
obtain the following evaluation
\begin{multline*}
\int_{[\overline{\M}_{g,\gamma}(\proj^1[a],\mu)]^{vir}}
\text{br}_0^*(H^r)  =
\frac{r!}{|\text{Aut}(\mu)|}\frac{a^{\ell}}
{a^{r_{\mathbb{B}}+
\sum_{j=1}^{\ell}\delta_{0,\<\frac{\mu_j}{a}\>}}}
\prod_{j=1}^{\ell}
\frac{\mu_j^{\left\lfloor\frac{\mu_j}{a}\right\rfloor}}
{\left\lfloor\frac{\mu_i}{a}\right\rfloor!}
\int_{\overline{\M}_{g,\gamma-\mu}(\B \mathbb{Z}_a)}
\frac{\sum_{i= 0}^\infty (-a)^i \lambda_i^{U}}{\prod_{j=1}^\ell
 (1-\mu_j\bar{\psi}_j)}.
\end{multline*}
On the right side, we have included the fundamental class
factors
$$\prod_{j=1}^\ell \frac{1}{\mu_j}$$
of the moduli spaces $P_j$.
For the component with trivial  monodromy representation,
a factor of $a$ must be inserted in the formula.

We can simplify the integral evaluation by using the calculation
\eqref{jjo} of $r_{\mathbb{B}}$,
\begin{equation*}
\begin{split}
&\quad r_{\mathbb{B}}+\sum_{i=1}^{\ell}\delta_{0,\<\frac{\mu_j}{a}\>}-\ell\\
&=g-1+\sum_{i=1}^n \frac{\gamma_i}{a}
+\sum_{\mu_j\neq 0\ \text{mod}\ a}\left(1-\<\frac{\mu_j}{a}\>\right)
+\left(\sum_{\mu_j = 0\ \text{mod}\ a}1\right)-\ell\\
&=g-1+\sum_{i=1}^n \frac{\gamma_i}{a} -\sum_{j=1}^{\ell}\<\frac{\mu_j}{a}\>.
\end{split}
\end{equation*}
The above calculation is not valid for the component with
trivial monodromy since $r_{\mathbb{B}}=g$ not $g-1$.
The discrepancy is exactly fixed by the extra factor $a$ required
for the
trivial monodromy case. We conclude
\begin{multline}\label{ffrrq}
\int_{[\overline{\M}_{g,\gamma}(\proj^1[a],\mu)]^{vir}}
\text{br}_0^*(H^r)  = \\
\frac{r!}{|\text{Aut}(\mu)|}
a^{1-g- \sum_{i=1}^n \frac{\gamma_i}{a} +\sum_{j=1}^\ell
\< \frac{\mu_j}{a} \>}
\prod_{j=1}^{\ell}
\frac{\mu_j^{\left\lfloor\frac{\mu_j}{a}\right\rfloor}}
{\left\lfloor\frac{\mu_i}{a}\right\rfloor!}
\int_{\overline{\M}_{g,\gamma-\mu}(\B \mathbb{Z}_a)}
\frac{\sum_{i=0}^\infty (-a)^i \lambda_i^{U}}{\prod_{j=1}^\ell
 (1-\mu_j\bar{\psi}_j)}.
\end{multline}
holds uniformly.
Theorem \ref{vvv} is then obtained from Lemmas \ref{ozoz} and
\ref{gew}. $\Box$

In degenerate cases, unstable integrals may appear on the right
side of the formula in Theorem \ref{vvv}.
The unstable integrals come in two forms
and are defined by the localization contributions:
$$
\int_{\overline{\M}_{0,(0)}(\B \mathbb{Z}_a)}
\frac{
\sum_{i\geq 0} (-a)^i \lambda_i^{U}
}
{(1-x \bar{\psi}_1)} = \frac{1}{a} \cdot \frac{1}
{x^2},$$

$$
\int_{\overline{\M}_{0,(m,-m)}(\B \mathbb{Z}_a)}
\frac{
\sum_{i\geq 0} (-a)^i \lambda_i^{U}
}
{(1-x \bar{\psi}_1)(1-y\bar{\psi}_2)} = \frac{1}{a} \cdot \frac{1}
{x+y}.$$
With the above definitions, Theorem \ref{vvv} holds in all
cases.
\label{bgt}

The disconnected formula \eqref{dderr} follows easily from the
connected case by the usual combinatorics of distributing
ramification points to the components of Hurwitz covers.

\subsection{Proof of Theorem \ref{vvvv}}
Suppose $\gamma$ satisfies the parity and strong negativity condition
with respect to $\mu$. 
Since
$$\delta= d-n - \frac{d- \sum_{i=1}^n \gamma_i}{a} <0,$$
the virtual dimension $r$ of $\overline{\M}_{g,\gamma}(\proj^1[a],\mu)$
is greater than $2g-2+d+\ell$. As a consequence, we immediately
obtain the vanishing
\begin{equation} \label{kkwq}
\int_{[\overline{\M}_{g,\gamma}(\proj^1[a],\mu)]^{vir}}
\text{br}^*(H^r) = 0
\end{equation}
since $H^r=0\in H^*(\text{Sym}^{2g-2+d+\ell}(\proj^1),\mathbb{Q})$.

We may nevertheless calculate \eqref{kkwq} by localization with the
lift $$H^r = (2g-2+d+\ell)[0]\cdot t^{-\delta}$$
which does {\em not} vanish equivariantly.
The analysis is identical to the calculations of Sections \ref{flo}-\ref{bgt}.
We find the integral \eqref{kkwq} is proportional (with nonzero factor)
to
\begin{equation*}
\int_{\overline{\M}_{g,\gamma-\mu}(\B \mathbb{Z}_a)}
\frac{\sum_{i=0}^\infty
(-a)^i \lambda_i^{U}}{\prod_{j=1}^\ell (1-\mu_j \bar{\psi}_j)},
\end{equation*}
and therefore conclude the vanishing. 

Assume now strong negativity does not hold,
but $\gamma$ satisfies the parity, negativity, and
boundedness condition.
By the proof of Lemma \ref{HHH}, using the boundedness condition,
the maps $$f: C \rightarrow \proj^1[a]$$
which satisfy $[0]\notin \text{br}_0(f)$ have no contraction over 0 and coarse
profile determined by $\gamma$. By the negativity condition, no such maps
exists. Hence, $[0]$ is always in $\text{br}_0(f)$. Therefore,
\begin{equation*}
\int_{[\overline{\M}_{g,\gamma}(\proj^1[a],\mu)]^{vir}}
\text{br}_0^*(H^r) = 0
\end{equation*}
and we conclude as above.$\Box$

\section{Examples}

\subsection{$\mathbb{Z}_2$ example}
The Hodge bundle $\mathbb{E}^{U}$ has a very simple interpretation
in the $\mathbb{Z}_2$ case.
Let
$$ \C \rightarrow
\overline{\M}_{g,\gamma}(\B \mathbb{Z}_2), \ \ \D\rightarrow \C $$
be the universal domain curve and
the universal $\mathbb{Z}_2$-bundle.
Let
$$
\epsilon: \overline{\M}_{g,\gamma}(\B \mathbb{Z}_2) \rightarrow
\overline{\M}_g, \ \
\tilde{\epsilon}: \overline{\M}_{g,\gamma}(\B \mathbb{Z}_2)
\rightarrow \overline{\M}_{g-1+\frac{n}{2}}$$
be the maps to moduli obtained from $\C$ and $\D$ respectively.
The  exact sequence
$$0 \rightarrow \epsilon^*(\mathbb{E}_g) \rightarrow
\tilde{\epsilon}^*(\mathbb{E}_{g-1+\frac{n}{2}}) \rightarrow
\mathbb{E}^{U} \rightarrow 0.$$
exhibits $\mathbb{E}^U$ as the $K$-theoretic difference
of the pulled-back Hodge bundles.
If $g=0$, then the situation{\footnote{The
map $\epsilon$ is not well-defined here for stability reasons.}} is even
simpler,
\begin{equation}\label{frex}
\mathbb{E}^U \stackrel{\sim}{=} \tilde{\epsilon}^*
(\mathbb{E}_{g-1+\frac{n}{2}}).
\end{equation}

Consider the case of
Theorem \ref{vvv} where $g=0$, $\gamma=(1,1)$,
and $\mu=(1,1)$. The statement is
$$H_0((1,1),(1,1)) = \frac{2}{2!2!} 2^1
\int_{\overline{\M}_{0,(1,1,1,1)}(\B \mathbb{Z}_2)} \frac{1-2\lambda^U_1}
{(1-\bar{\psi}_1)(1-\bar{\psi}_2)}.$$
The double Hurwitz number on the left is $\frac{1}{2}$.
Expansion of the right side yields:
\begin{eqnarray*}
\int_{\overline{\M}_{0,(1,1,1,1)}(\B \mathbb{Z}_2)}
\frac{1-2\lambda_1^U}{(1-\bar{\psi}_1)(1-\bar{\psi}_2)}
& = &  \frac{1}{2} \int_{\overline{\M}_{0,4}} \frac{1}{(1-\psi_1)(1-\psi_2)}
-2 \int_{\overline{\M}_{0,(1,1,1,1)}(\B \mathbb{Z}_2)} \lambda^U_1 \\
& = & 1 - 2
\int_{\overline{\M}_{0,(1,1,1,1)}(\B \mathbb{Z}_2)} \lambda^U_1.
\end{eqnarray*}
To evaluate the last integral, we note the map
$$\tilde{\epsilon}:\overline{\M}_{0,(1,1,1,1)}(\B \mathbb{Z}_2)
\rightarrow \overline{\M}_{1,1},$$ where the first branch point
is selected for the marking on the elliptic curve,
is of degree 6. Moreover, $\lambda_1^U$ is the pull-back
of $\lambda_1$ under $\tilde{\epsilon}$ by \eqref{frex}. Hence,
$$
 1 - 2
\int_{\overline{\M}_{0,(1,1,1,1)}(\B \mathbb{Z}_2)}
\lambda^U_1  = 1 - 2 \cdot 6 \cdot \frac{1}{24} = \frac{1}{2}.$$

\subsection{Vanishing example}
The simplest example of the vanishing of Theorem 2 occurs for
$\mathbb{Z}_2$. Let $g=0$,
$$\gamma=(\underbrace{1,\ldots,1}_{n})$$
and $\mu=(1)$. By the parity condition, $n$ must be odd.
Boundedness holds.
For the negativity condition, we require $n\geq 2$. 
By Theorem 2 (i),
$$\int_{\overline{\M}_{0,\gamma-\mu}(\B \mathbb{Z}_2)} \frac{\sum_{i\geq 0}
(-2)^i\lambda^U_i}{1-\bar{\psi}_1}$$
vanishes for all odd $n\geq 3$.

We now use the identification of $\lambda_i^U$ with the Chern classes
of the Hodge bundle $\tilde{\epsilon}^*(\mathbb{E}_{\frac{n-1}{2}})$
whose fiber over
$$f:[D/\mathbb{Z}_2] \rightarrow \B \mathbb{Z}_2$$
is simply given by the space of differential
forms on the genus $\frac{n-1}{2}$ curve $D$. 
The Chern roots of $\tilde{\epsilon}^*(\mathbb{E}_{\frac{n-1}{2}})$
can be identified by the vanishing sequence
at a Weierstrass point of $D$. The Weierstrass point can be chosen to
lie above the marking corresponding to the single
part of $\mu$. The Chern roots of 
$\tilde{\epsilon}^*(\mathbb{E}_{\frac{n-1}{2}})$
 are then
$L, 3L, \ldots, (n-2)L$ where $L$ is the Chern class of the
cotangent line of
the Weierstrass point. 
The class $L$ on $\overline{\M}_{0,\gamma-\mu}(\B \mathbb{Z}_2)$
is $\frac{1}{2}\bar{\psi}_1$.
Expanding the Chern roots, we find
\begin{eqnarray*}
\int_{\overline{\M}_{0,\gamma-\mu}(\B \mathbb{Z}_2)} \frac{\sum_{i\geq 0}
(-2)^i\lambda^U_i}{1-\bar{\psi}_1}
& = &   
\int_{\overline{\M}_{0,\gamma-\mu}(\B \mathbb{Z}_2) } 
\frac{\prod_{i=1}^{\frac{n-1}{2}} (1-(2i-1)\bar{\psi}_1)}
{(1-\bar{\psi}_1)} \\
& = &  
\int_{   \overline{\M}_{0,\gamma-\mu}(\B \mathbb{Z}_2)       } 
{\prod_{i=2}^{\frac{n-1}{2}} (1-(2i-1)\bar{\psi}_1)} \\
& = & 0,
\end{eqnarray*}
where the last integral vanishes for dimension reasons.

\subsection{$\mathbb{Z}_\infty$ example}
An interesting feature of Theorem \ref{vvv} is the possibility of studying the behavior
for large $a$. Let $\gamma=(\gamma_1,\ldots,\gamma_n)$ determine a partition of $d$,
$$d= \sum_{i=1}^n \gamma_i.$$
Let $\mu=(d)$ consist of a single part.
For $a>d$, the rank of the Hodge bundle 
$$\mathbb{E}^U \rightarrow
\overline{\M}_{0,\gamma-\mu}(\B\mathbb{Z}_a)$$ is 0 by  \eqref{jjo}.
Since the parity, non-negativity, and boundedness conditions hold
for $a>d$, we may apply Theorem 1 to conclude
\begin{eqnarray*}
H_{0}(\gamma,(d))& = & \frac{(n-1)!}{|\text{Aut}(\gamma)|}\  a \int_{\overline{\M}_{0,\gamma-\mu}(\B \mathbb{Z}_a)}
\frac{1}{1-d\bar{\psi}_1} \\
& = & \frac{(n-1)!}{|\text{Aut}(\gamma)|}\  d^{n-2},
\end{eqnarray*}
which is a well-known formula for genus 0 double Hurwitz numbers.

\subsection{1-point series} \label{bnf}
If $\mu=(d)$ consists of a single part, the entire generating series
for double Hurwitz numbers has been computed 
\footnote{We write Theorem 3.1 of \cite{GJV} in terms of
$\sin$ instead of $\sinh$ and divide
by
 $|\text{Aut}(\nu)|$ since we do not mark ramifications in
our definition of Hurwitz numbers.}
in \cite{GJV}:
\begin{equation} \label{explicithurwitz}
\sum_{g\geq 0} t^{2g}(-1)^g H_g(\nu, (d))
=\frac{r!\ d^{r-1}}{|\text{Aut}(\nu)|}
\prod_{k\geq 1}\left
(\frac{\text{sin}(kt/2)}{kt/2}\right)^{m_k(\nu)-\delta_{k,1}},
\end{equation}
where $r=r_g(\nu,(d))$ and
 $m_k(\nu)$ is the number of times $k$ appears as a part of $\nu$.
Single part double Hurwitz numbers are considerably simpler 
because such covers are automatically connected and the only characters 
with nonzero evaluation on the $d$-cycle are exterior powers of the 
standard $(d-1)$-dimensional representation.

 Let $\gamma=(\gamma_1,\ldots,\gamma_n)$ be a vector 
of nontrivial
elements of $\mathbb{Z}_a$ satisfying the boundedness condition.  
We will consider degrees $d$ for which the parity and non-negativity
 conditions 
are satisfied.  Then,
 $$d-\sum_{i=1}^n \gamma_i=ab$$
for an integer $b\geq 0$.
Consider the generating series
$$
F_{\gamma}(t,z)=\sum_{g= 0}^\infty
\sum_{l=-\infty}^g t^{2g}z^l
\int_{\overline{\M}_{g,\gamma-(d)}(\B \mathbb{Z}_a)}
\bar{\psi}_0^{2g-2+\ell(\gamma)+l} \lambda^U_{g-l}
$$
where 
$\bar{\psi}_0$ is the class corresponding to the point with monodromy $-d$.

The double Hurwitz number formula of 
Theorem \ref{vvv} is
\begin{eqnarray*} H_g(\gamma_+, (d))&=& \frac{r!}{|\text{Aut}(\gamma)|}a^{1-g-
\sum_{i=1}^n \frac{\gamma_i}{a}+  \< \frac{d}{a} \>}
\frac{d^{\left\lfloor\frac{d}{a}\right\rfloor}}{\left\lfloor\frac{d}{a}\right\rfloor !}
\sum_{l=-\infty}^g d^{r-b-1+l}(-a)^{g-l}
\int_{\overline{\M}_{g,\gamma-(d)}(\B\mathbb{Z}_a)}\bar{\psi}_0^{r-b-1+l} \lambda^U_{g-l}         \\
&=&
(-1)^g\frac{ad^{r-1}r!\left(\frac{d}{a}\right)^{\left\lfloor\frac{\sum \gamma_i}{a}\right\rfloor}}
{|\text{Aut}(\gamma)|\left(b+\left\lfloor\frac{\sum \gamma_i}{a}\right\rfloor\right)!} \sum_{l=-\infty}^g
\left(\frac{-d}{a}\right)^l
\int_{\overline{\M}_{g,\gamma-(d)}(\B\mathbb{Z}_a)}\bar{\psi}_0^{r-b-1+l} \lambda^U_{g-l}
\end{eqnarray*}
or, equivalently,
$$
\sum_{g\geq 0} (-1)^gt^{2g}H_g(\gamma_+, (d))
=\frac{ad^{r-1}r!}{|\text{Aut}(\gamma)|\left(b+\left\lfloor\frac{\sum \gamma_i}{a}\right\rfloor\right)!}
\left(\frac{d}{a}\right)^{\left\lfloor\frac{\sum\gamma_i}{a}\right\rfloor}
F_{\gamma}(t, -d/a)
$$
where $r=r_g(\gamma_+,(d))$.
After combining with \eqref{explicithurwitz}, we obtain
\begin{equation} \label{fnice}
F_{\gamma}(t, -d/a)=
\frac{1}{a} \frac{\left(b+\left\lfloor\frac{\sum \gamma_i}{a}\right\rfloor \right)!}{b!}
\left(\frac{a}{d}\right)^{\left\lfloor\frac{\sum \gamma_i}{a}\right\rfloor}
\prod_{k\geq 1}
\left(\frac{\sin(kt/2)}{kt/2}\right)^{m_k(\gamma_+)-\delta_{k,1}}.
\end{equation}
for $b\geq 0$.

\begin{thm}  \label{vrt}
$F_{\gamma}(t, z)$ equals
\begin{equation*}
\frac{1}{a} \frac{\left(-z - \sum \frac{\gamma_i}{a} +
\sum\left\lfloor\frac{\sum \gamma_i}{a}\right\rfloor \right)!}
{\left(-z - \sum\frac{\gamma_i}{a}\right)!}
(-z)^{-\left\lfloor\frac{\sum \gamma_i}{a}\right\rfloor}
\left(\frac{\sin(at/2)}{at/2}\right)^{-z - \sum \frac{\gamma_i}{a}}
\prod_{k\geq 1}
\left(\frac{\sin(kt/2)}{kt/2}\right)^{m_k(\gamma)-\delta_{k,1}}.
\end{equation*}
\end{thm}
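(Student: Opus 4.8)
The plan is to deduce Theorem~\ref{vrt} from the special values already computed in \eqref{fnice} by a rigidity/interpolation argument. Recall \eqref{fnice} evaluates $F_\gamma(t,z)$ at $z=-d/a$ for every $d$ with $b=(d-\sum_{i=1}^n\gamma_i)/a\in\mathbb{Z}_{\ge0}$; setting $w=-z-\sum_{i=1}^n\gamma_i/a$, the substitution $z=-d/a$ is precisely $w=b$, so \eqref{fnice} determines $F_\gamma(t,z)$ at all $w\in\mathbb{Z}_{\ge0}$. First I would verify that the closed form asserted in Theorem~\ref{vrt}, evaluated at $w=b$, returns the right-hand side of \eqref{fnice}: the factorial quotient becomes $(b+M)!/b!$ with $M=\lfloor\sum_i\gamma_i/a\rfloor$, the factor $(-z)^{-M}$ becomes $(a/d)^{M}$, the exponent $-z-\sum_i\gamma_i/a$ of $\sin(at/2)/(at/2)$ becomes $b$, and — since $\gamma_i\le a-1$ gives $m_a(\gamma)=0$, hence $m_a(\gamma_+)=b$ and $m_k(\gamma_+)=m_k(\gamma)$ for $k\ne a$ — the factor $\bigl(\sin(at/2)/(at/2)\bigr)^{b}\prod_{k\ge1}\bigl(\sin(kt/2)/(kt/2)\bigr)^{m_k(\gamma)-\delta_{k,1}}$ is exactly $\prod_{k\ge1}\bigl(\sin(kt/2)/(kt/2)\bigr)^{m_k(\gamma_+)-\delta_{k,1}}$. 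Thus the asserted formula agrees with $F_\gamma$ at the infinitely many points $w=0,1,2,\dots$, and the whole content is that this agreement is rigid.

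To upgrade pointwise agreement to an identity I would fix $g$ and compare coefficients of $t^{2g}$ as functions of $z$. On the right-hand side of Theorem~\ref{vrt}, the quotient $\frac{(w+M)!}{w!}=(w+1)(w+2)\cdots(w+M)$ is a polynomial in $w$, hence in $z$, of degree $M$; the factor $(-z)^{-M}$ has a pole of order at most $M$ at $z=0$; and the factor $\bigl(\sin(at/2)/(at/2)\bigr)^{w}\prod_{k\ge1}\bigl(\sin(kt/2)/(kt/2)\bigr)^{m_k(\gamma)-\delta_{k,1}}$ equals $\exp\!\bigl(w\,\phi_a(t)+\sum_{k\ge1}(m_k(\gamma)-\delta_{k,1})\phi_k(t)\bigr)$ with each $\phi_k(t)=\log\bigl(\sin(kt/2)/(kt/2)\bigr)$ a power series in $t^2$ with vanishing constant term, so its coefficient of $t^{2g}$ is a polynomial in $w$ of degree at most $g$. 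Hence $(-z)^{M}\cdot[t^{2g}](\text{RHS})$ is a polynomial in $z$ of degree at most $M+g$.

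For the left-hand side, write $[t^{2g}]F_\gamma(t,z)=\sum_l z^l\int_{\overline{\M}_{g,\gamma-(d)}(\B\mathbb{Z}_a)}\bar\psi_0^{\,2g-2+n+l}\lambda^U_{g-l}$ and bound the range of $l$ using the orbifold Riemann--Roch formula \eqref{jjo}: for $\mu=(d)$ the rank of $\mathbb{E}^U$ over $\overline{\M}_{g,\gamma-(d)}(\B\mathbb{Z}_a)$ is $g-1+\sum_i\gamma_i/a+(1-\langle d/a\rangle)=g+M$ when $\sum_i\gamma_i\not\equiv0\pmod a$ — using $\langle d/a\rangle=\langle\sum_i\gamma_i/a\rangle$ from the parity condition — and is no larger otherwise (and on the trivial-monodromy locus, which arises only when $\gamma=\emptyset$ and is treated as in the proof of Theorem~\ref{vvv}). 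Then $\lambda^U_{g-l}=0$ for $g-l>g+M$ and $\bar\psi_0^{\,2g-2+n+l}=0$ for $l<2-2g-n$ confine the sum to $-M\le l\le g$, so $(-z)^{M}\cdot[t^{2g}]F_\gamma(t,z)$ is likewise a polynomial in $z$ of degree at most $M+g$. Two polynomials of degree $\le M+g$ agreeing at the infinitely many points $z=-\sum_i\gamma_i/a-b$, $b\in\mathbb{Z}_{\ge0}$, must coincide; dividing by $(-z)^{M}$ and summing over $g$ gives Theorem~\ref{vrt}.

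I expect the main obstacle to be this degree bookkeeping, which is what makes the interpolation rigid: extracting the exact rank of $\mathbb{E}^U$ from \eqref{jjo} (the point being the age cancellation $\langle d/a\rangle=\langle\sum_i\gamma_i/a\rangle$), and correctly handling the borderline case $\sum_i\gamma_i\equiv0\pmod a$ — where the $-z$ factor hidden in $(w+M)!/w!$ on the right matches a one-lower Hodge rank on the left — together with the exceptional trivial-monodromy component, exactly as in the proof of Theorem~\ref{vvv}. Everything else, namely the $\sin$-series expansion and the genus generating function \eqref{explicithurwitz}, is already encapsulated in \eqref{fnice}.
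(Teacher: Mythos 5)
Your proposal is correct and is essentially the paper's own argument: verify the asserted closed form against the special evaluations \eqref{fnice} at $z=-d/a$ for all admissible $d$, observe that the $t^{2g}$ coefficients of both sides are Laurent polynomials in $z$, and conclude by interpolation at infinitely many points. The extra degree bookkeeping via \eqref{jjo} is harmless but not needed — two Laurent polynomials agreeing at infinitely many points coincide without any degree bound — though your explicit verification that the $l$-sum is finite (so the left side really is a Laurent polynomial) is a point the paper leaves implicit.
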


\begin{proof}
Using the standard polynomial expansion 
$$\frac{\left(-z - \sum \frac{\gamma_i}{a} +
\sum\left\lfloor\frac{\sum \gamma_i}{a}\right\rfloor \right)!}
{\left(-z - \sum\frac{\gamma_i}{a}\right)!} =
\left(-z - \sum \frac{\gamma_i}{a} +
\sum\left\lfloor\frac{\sum \gamma_i}{a}\right\rfloor \right) \ldots
\left(-z - \sum \frac{\gamma_i}{a} + 1 \right),
$$
we see
the $t^{2g}$ coefficients of both sides of Theorem \ref{vrt}
are Laurent polynomials
in $z$.
Equation \eqref{fnice} shows Theorem \ref{vrt} holds for
all evaluations of the form
$z= -d/a$ where 
$$d-\sum_{i=1}^n \gamma_i=ab$$
and $b$ is a non-negative integer.
Since there are infinitely many such evaluations, the coefficient Laurent
polynomials in $z$ must be equal for all $t^{2g}$.
\end{proof}

If we specialize Theorem 4 to the case 
where  $\gamma=\emptyset$, we obtain
\begin{equation} \label{onemarkedpoint}
\frac{1}{a}
+\sum_{g>0}\sum_{l=0}^g t^{2g}z^l\int_{\overline{\M}_{g,1}(\B\mathbb{Z}_a)}\bar{\psi}_1^{2g-2+l}\lambda^U_{g-l}
=\frac{1}{a} 
\left(\frac{at/2}{\sin(at/2)}\right)^{z} \frac{t/2}{\sin(t/2)}
\end{equation}
If $\gamma=\emptyset$ and $a=1$ we recover 
\begin{equation} \label{FPequation}
1+\sum_{g>0}\sum_{l=0}^g t^{2g} z^l\int_{\overline{\M}_{g,1}}\psi_1^{2g-2+l}\lambda_{g-l}
=\left(\frac{t/2}{\sin(t/2)}\right)^{z+1}
\end{equation}
first calculated in \cite{FPH}.

In (\ref{onemarkedpoint}), the term $\lambda^U_g$ vanishes for dimensional reasons except over the trivial monodromy component, 
where it agrees with the usual $\lambda_g$.  
Indeed, setting $z=0$ in (\ref{onemarkedpoint}) yields
$$\frac{1}{a}+\sum_{g>0}t^{2g}
\int_{\overline{\M}_{g,1}(\B\mathbb{Z}_a)}\psi_1^{2g-2}\lambda^U_g=\frac{1}{a}\frac{t/2}{\sin(t/2)}$$
which is the expected contribution from (\ref{FPequation}) with a factor of $1/a$ coming from the automorphisms.

\section{Abelian groups}
\subsection{Pull-back}
  For an abelian group $G$ and irreducible representation $R$,
 recall the sequence (\ref{exactsequence}),
$$0 \rightarrow K \rightarrow G \stackrel{\phi^R}{\rightarrow}
 \text{Im}(\phi^R) \stackrel{\sim}{=} \mathbb{Z}_a \rightarrow0.$$
By construction $R\stackrel{\sim}{=}\phi^{R*}(U)$.
The homomorphism $\phi^R$ induces a canonical map
$$\rho: \overline{\M}_{g,\gamma}(\B G) \rightarrow
\overline{\M}_{g,\phi^R(\gamma)}(\B \mathbb{Z}_a)$$
by sending a principal $G$-bundle to its quotient by $K$.

\begin{lemma} \label{pullbacklemma}
$\bE^R\stackrel{\sim}{=}\rho^*(\bE^U)$.
\end{lemma}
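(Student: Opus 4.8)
The plan is to descend the Hodge bundle through the $K$-quotient of the universal curve and to recognize the two bundles as the $U$- and $R$-isotypic pieces of essentially the same pushforward. Write $\M:=\Mbar_{g,\gamma}(\B G)$ and $\M':=\Mbar_{g,\phi^R(\gamma)}(\B\mathbb{Z}_a)$, and let $\pi\colon\D\to\M$ and $\pi'\colon\D'\to\M'$ denote the universal curves carrying the universal principal $G$- and $\mathbb{Z}_a$-bundles, so that the fiber of $\D$ over a point is precisely the nodal curve $D$ used to define $\bE^R$. By the very definition of $\rho$ (quotient a $G$-cover $D\to C$ by $K$), the quotient $\D_K:=\D/K$ over $\M$, with structure map $\pi_K\colon\D_K\to\M$, satisfies $\D_K\cong\rho^*\D'$; in particular $\D_K\to\M$ is flat, being a base change of $\pi'$. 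Since the pushforward of the relative dualizing sheaf of a proper flat family of nodal curves is locally free and commutes with base change (Grothendieck duality), this already yields
\[
\rho^*\bE^U\;\cong\;\big(\text{the $U$-isotypic part of }\pi_{K,*}\,\omega_{\D_K/\M}\big).
\]
It then remains to identify this with $\bE^R=\big(\text{the $R$-isotypic part of }\pi_*\omega_{\D/\M}\big)$.

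The heart of the argument is the claim that the trace map for the finite morphism $q\colon\D\to\D_K$ induces an isomorphism $(q_*\,\omega_{\D/\M})^K\xrightarrow{\ \sim\ }\omega_{\D_K/\M}$, equivariantly for the residual $G/K\cong\mathbb{Z}_a$-action, which is trivial on the target. I would verify this locally on $\D_K$. Away from the nodes it is the classical Hurwitz computation: where $q$ is locally $s=t^{e}$ with $K$ acting through the cyclic stabilizer of order $e$, the $K$-invariants of $\mathcal{O}_D\,dt$ are spanned by $t^{e-1}\,dt=\tfrac1e\,ds$, a local generator of $\omega_{\D_K/\M}$. Along a node of a fiber $D$ one compares the residue exact sequence
\[
0\to\omega_D\to\nu_*\,\omega_{\widetilde D}\big(\nu^{-1}(\mathrm{nodes})\big)\to\bigoplus_{\text{nodes of }D}\com\to 0
\]
($\nu$ the normalization) with its analogue for $D/K$: the $K$-invariants of the rightmost term are computed orbit by orbit, and for a node whose stabilizer interchanges its two branches the residue line carries a nontrivial character, so the invariants vanish and the quotient is smooth there, while in the remaining cases the invariants contribute one copy of $\com$ matching a node of $D/K$; the middle term is handled by the smooth-curve computation. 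Making this precise in families and checking its compatibility with $\pi_*$ is the step I expect to require the most care. (Here one uses that $q$ is finite, that $\D_K/\M$ is flat, and that the quotient of a nodal curve by a finite group in characteristic zero is again a nodal curve — all standard for admissible covers.)

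Finally, since $K=\ker\phi^R$ and $R\cong(\phi^R)^*U$, the character $R$ of $G$ is trivial on $K$; hence for any finite-dimensional $G$-representation $V$ the $R$-isotypic summand of $V$ coincides with the $U$-isotypic summand of the $G/K$-representation $V^K$. Because the ground field has characteristic zero, the functors $(-)^K$ and isotypic projection are exact, and therefore commute with $\pi_*$ and with pullback along $\rho$. Applying this to the Hodge bundles gives
\[
\bE^R
=\big(\text{$R$-part of }\pi_*\omega_{\D/\M}\big)
=\big(\text{$U$-part of }\pi_*(q_*\omega_{\D/\M})^K\big)
=\big(\text{$U$-part of }\pi_{K,*}\,\omega_{\D_K/\M}\big)
\cong\rho^*\bE^U,
\]
which is the assertion of the lemma.
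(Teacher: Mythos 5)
Your proposal is correct, but it reaches the isomorphism by a genuinely different mechanism than the paper. The paper's proof goes in the opposite direction and is deliberately soft: it realizes $\bE$ fiberwise as $H^0(D,\omega_D)$ (forms on the normalization with opposite residues at node preimages), pulls back differential forms along the quotient map $\tilde\rho$ of universal curves to get a bundle map $d\tilde\rho\colon\rho^*(\bE^U)\to\bE^R$, observes that pullback of a nonzero form under a finite surjective map is nonzero (so $d\tilde\rho$ is fiberwise injective), and then concludes by noting the two bundles have the same rank via the orbifold Riemann--Roch formula \eqref{rr_twisted_curve}. You instead push forward and take $K$-invariants, establishing the sheaf-level descent isomorphism $(q_*\omega_{\D/\M})^K\cong\omega_{\D_K/\M}$ by a local computation at ramification points and nodes, and then do the isotypic bookkeeping ($R$-part of $V$ equals $U$-part of $V^K$) together with cohomology and base change. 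What your route buys is independence from the rank comparison --- the isomorphism is produced directly rather than deduced from injectivity plus equality of dimensions --- and a statement at the level of dualizing sheaves on the curves that is stronger than the lemma; what it costs is exactly the local analysis you flag as delicate (though note that for admissible $G$-covers, condition (v) forces distinct branches of a node of $D$ to map to distinct branches downstairs, so the branch-swapping case in your residue computation never actually occurs here, and the base-change claims are unproblematic because the number of connected components of the $G$-cover, hence $h^0(\mathcal{O}_D)$, is locally constant on $\overline{\M}_{g,\gamma}(\B G)$). Both arguments are complete in outline; the paper's is shorter because Riemann--Roch does the work that your trace-map computation does by hand.
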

\begin{proof}
Recall $\bE\rightarrow \overline{\M}_{g,n}(\B H)$ 
is the bundle whose fiber over 
$$[f]:[D/H]\to \B H\in \overline{\M}_{g,n}(\B H)$$
 is  $H^0(D, \omega_D)$.
The latter 
can be understood as the space of 
 1-forms $\alpha$ on the 
normalization $\tilde{D}$ of $D$
with possible simple poles with opposite residues at the two 
preimages of each node $q_i$.

Let $\tilde{\rho}$ be the map between the universal principal $G$- 
and $\mathbb{Z}_a$-curves that induces $\rho$.  
We obtain
  $$d\tilde{\rho}:\rho^*(\bE)\to \bE$$
by pulling-back differential forms.
An easy verification shows $\tilde{\rho}$ is well-defined even at
points in the moduli space $\overline{\M}_{g,\gamma}(\B G)$
for which the
$G$-curve is nodal.  

The map $d\tilde{\rho}$ is injective on each fiber since 
the pull-back of a nonzero differential form by a finite surjective 
map is nonzero.  
Certainly $d\tilde{\rho}$ carries the subbundle 
$\rho^*(\bE^U)$ to the subbundle $\bE^R$.  
These bundles have the same dimension by the 
Riemann-Roch formula for twisted curves. Hence,
 $d\tilde{\rho}$ is an isomorphism.
\end{proof}

The map $\rho$ does not preserve the 
isotropy groups at the marked points.
However, since the classes $\bar{\psi}_i$ 
are pulled-back from $\overline{\M}_{g,n}$, 
$$\rho^*(\bar{\psi})=\bar{\psi}.$$   
By Lemma \ref{pullbacklemma}, we 
concluded the integrand in Theorem \ref{vvvvv} is exactly the 
integrand of Theorem $\ref{vvv}$ pulled-back via $\rho$.

\subsection{Degree}

The degree of $\rho$ is determined by the following result.
\begin{lemma} \label{degreelemma} We have
$$
\mathrm{deg}(\rho)=\left\{
\begin{array}{ll}
0 & \sum_i \gamma_i \neq 0 \\
|K|^{2g-1} & \sum_i \gamma_i =0
\end{array} \right. .
$$

\end{lemma}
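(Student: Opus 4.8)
The plan is to compute the degree of $\rho$ by analyzing the fibers of the map sending a principal $G$-bundle to its quotient by $K$, which amounts to counting the ways of lifting a $\mathbb{Z}_a$-bundle to a $G$-bundle with the prescribed monodromy. First I would work over the open locus $\M_{g,n} \subset \overline{\M}_{g,n}$, where the covers are smooth, since degrees are determined generically and $\rho$ is representable; the answer there propagates to the whole moduli space. Fix a point $[C, p_1, \ldots, p_n] \in \M_{g,n}$ and a principal $\mathbb{Z}_a$-bundle $Q$ on the punctured curve with monodromy $\phi^R(\gamma)$. A lift of $Q$ to a principal $G$-bundle $P$ with monodromy $\gamma$ corresponds, via the exact sequence $0 \to K \to G \to \mathbb{Z}_a \to 0$, to the data of a suitable extension; equivalently, working with the homomorphism description, $G$-bundles on the punctured surface with monodromy $\gamma$ are classified by homomorphisms $\pi_1(C \setminus \{p_1,\ldots,p_n\}) \to G$ sending the $i$-th loop around $p_i$ into the conjugacy class of $\gamma_i$ (which, since $G$ is abelian, means exactly to $\gamma_i$), modulo the automorphisms of the bundle.

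The key counting step is then: given the composite homomorphism $\bar\varphi: \pi_1(C\setminus\{p_i\}) \to \mathbb{Z}_a$ defining $Q$, count lifts $\varphi: \pi_1(C\setminus\{p_i\}) \to G$ with $\varphi$ of monodromy type $\gamma$. Since $\pi_1$ of the $n$-punctured genus $g$ surface is generated by $a_1, b_1, \ldots, a_g, b_g, c_1, \ldots, c_n$ with the single relation $\prod [a_j,b_j] \prod c_i = 1$, and the constraints on the $c_i$ already pin them down to representatives of the cosets $\gamma_i$ modulo $K$, the freedom in a lift is: choose $\varphi(a_j), \varphi(b_j) \in G$ lifting $\bar\varphi(a_j), \bar\varphi(b_j)$ — that is $|K|^{2g}$ choices — and choose the $\varphi(c_i)$ among their determined values in $K$-torsors; then impose the single surjection-image relation, which cuts down by a factor of $|K|$ precisely when the relation in $K$ is solvable. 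Because $G$ is abelian, $\prod[a_j,b_j] = 1$, so the relation reads $\prod \varphi(c_i) = 1$ in $G$, i.e. $\sum_i \varphi(c_i) = 0$; projecting to $\mathbb{Z}_a$ this is automatic (it is the parity condition), so what remains is a condition in $K$: it is solvable, and the solution set is a $K$-torsor, exactly when $\sum_i \gamma_i = 0$ in $G$ (where each $\gamma_i$ is now viewed in $G$). I should also be careful about automorphisms of bundles: $\mathrm{Aut}$ of a connected $G$-bundle is the centralizer of the image, which for abelian $G$ with surjective monodromy onto... — but here one divides in the stacky sense and these automorphism factors cancel between source and target. Altogether, when $\sum_i \gamma_i = 0$ one gets $|K|^{2g} / |K| = |K|^{2g-1}$ lifts, and when $\sum_i \gamma_i \neq 0$ there are none, giving $\deg(\rho) = 0$.

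The main obstacle I anticipate is handling the bookkeeping at the marked points and the automorphism/gerbe factors correctly: the $\mathbb{Z}_a$-monodromy $\phi^R(\gamma_i)$ at $p_i$ only determines $\varphi(c_i)$ up to $K$, so naively there are $|K|^n$ choices for the $c_i$, but the definition of $\overline{\M}_{g,\gamma}(\B G)$ fixes the conjugacy classes $\gamma_i$ in $G$ — not merely their images — so in fact each $\varphi(c_i)$ is rigidly determined, and the $|K|^n$ apparent freedom is illusory. Reconciling this against the degree of $\rho$ as a map of stacks (which don't have matching marked isotropy, as noted in the paragraph after Lemma \ref{pullbacklemma}) requires care: $\rho$ changes the isotropy groups at the $p_i$ from $\langle \gamma_i \rangle \subset G$ to $\langle \phi^R(\gamma_i)\rangle \subset \mathbb{Z}_a$, contributing local degree factors $|\ker(\phi^R)|$ at each marking that must be tracked, and these conspire with the automorphism groups of disconnected $G$-curves covering connected $\mathbb{Z}_a$-curves to produce the clean answer. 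An alternative, cleaner route that sidesteps the explicit $\pi_1$ computation: identify the fiber of $\rho$ over a fixed $\mathbb{Z}_a$-bundle with the groupoid of $K$-torsors on the total space $[D/\mathbb{Z}_a]$ compatible with the $G$-extension data, use that $H^1$ of a genus $g$ twisted curve with coefficients in the constant sheaf $K$ (with the marked-point constraints) has order $|K|^{2g}$ over an abelian base, and then the single global obstruction in $H^2$-type / the product-of-loops relation contributes the remaining $|K|^{-1}$ exactly in the $\sum_i\gamma_i=0$ case. I would present whichever of these two the surrounding exposition makes most natural, likely the homomorphism count since it is the most transparent. Once the generic degree is established, I conclude by noting $\rho$ has constant degree over each connected component (as stated in the introduction), so the generic value computes it everywhere, completing the proof.
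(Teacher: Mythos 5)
Your overall strategy --- fix a smooth curve, present $\pi_1(C\setminus\{p_1,\dots,p_n\})$ with the single relation, and count homomorphisms to $G$ lifting a given homomorphism to $\mathbb{Z}_a$ --- is exactly the route the paper takes, and you land on the correct answer. However, the middle of your count contains two errors that cancel. First, once you (correctly) note that each $\varphi(c_i)$ is rigidly pinned to $\gamma_i\in G$ (conjugacy classes in an abelian group are single elements), the surface relation imposes \emph{no} condition on the $|K|^{2g}$ lifts of $\varphi(a_j),\varphi(b_j)$: since $G$ is abelian the commutators die, so the relation reads $\sum_i\gamma_i=0$, a condition on the discrete data $\gamma$ alone. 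It is satisfied by all $|K|^{2g}$ lifts or by none; it does not ``cut down by a factor of $|K|$,'' and your count of $|K|^{2g}/|K|$ set-theoretic points in the fiber is off by $|K|$. Second, the automorphism factors do \emph{not} cancel between source and target: a cover in $\M_{g,\gamma}(\B G)$ has automorphism group all of $G$ (of order $a|K|$), while its image has automorphism group $\mathbb{Z}_a$, so the stacky degree carries the ratio $a/|G|=1/|K|$. This is precisely the missing factor, and it is how the paper gets $\deg\rho=|K|^{2g}\cdot\tfrac{1}{|K|}=|K|^{2g-1}$. In your write-up the spurious $1/|K|$ from the relation stands in for the genuine $1/|K|$ from the automorphisms, which is why the product comes out right.

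Relatedly, the assertion that the relation ``is solvable, and the solution set is a $K$-torsor, exactly when $\sum_i\gamma_i=0$ in $G$'' conflates two different problems: if the $\varphi(c_i)$ were constrained only modulo $K$, solvability would be governed by the weaker parity condition in $\mathbb{Z}_a$; with the $\varphi(c_i)$ fixed there is nothing to solve, and the case $\sum_i\gamma_i\neq 0$ gives $\deg\rho=0$ simply because $\overline{\M}_{g,\gamma}(\B G)$ is empty --- which is the paper's argument and which your fixed-monodromy setup already yields for free. The digression about ``local degree factors $|\ker(\phi^R)|$ at each marking'' is likewise a red herring: once the automorphism groups of the covers are compared correctly, no further bookkeeping at the marked points is needed. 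The fix is short --- delete the division by $|K|$ at the relation step and replace the claimed cancellation of automorphisms by the ratio $|\mathrm{Aut}_{\mathbb{Z}_a}|/|\mathrm{Aut}_G|=1/|K|$ --- but as written the argument is not correct, only its conclusion.
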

\begin{proof}
Consider a nonsingular curve $[C,p_1,\dots, p_n]\in\overline{\M}_{g,n}$.  Let
$$ \Gamma=\pi_1(C \setminus \{p_1,\dots, p_n\})
=\left\langle \Gamma_i, A_j, B_j\Big|\prod_{i=1}^n 
\Gamma_i \prod_{j=1}^g[A_j, B_j]\right\rangle,$$
where $\Gamma_i$ is a loop around $p_i$ and 
the loops $A_j, B_j$ are the standard generators of $\pi_1(C)$.

The elements of $\overline{\M}_{g,\gamma}(\B G)$ lying above 
$[C, p_1,\dots p_n]$ are in bijective correspondence
with the homomorphisms{\footnote{
Composition in $\Gamma$ 
is written multiplicatively while composition in $G$ is additive.}}
 $\varphi:\Gamma\to G$ with
\begin{equation} \label{phiequation}
\varphi(\Gamma_i)=\gamma_i.
\end{equation}
Since $G$ is abelian, $\varphi( [A_j, B_j] )=0$. Hence,
the parity condition
\begin{equation} \label{nonempty}
\sum_{i=1}^n \gamma_i=0
\end{equation}
must be satisfied for $\overline{\M}_{g,\gamma}(\B G)$ to be nonempty.

If the parity condition holds, 
then the images of $A_j$ and $B_j$ are completely unconstrained.
There are $|G|^{2g}$ homomorphisms $\phi$ satisfying (\ref{phiequation}).  
Stated in terms of homomorphisms, the map $\rho$ corresponds to 
composing $\varphi:\Gamma\to G$ with $\phi^R:G\to\mathbb{Z}_a$.  
Since there are $|K|$ elements of $G$ in the preimage of any element 
of $\mathbb{Z}_a$, there are $|K|^{2g}$ elements in a generic fiber of $\rho$. 
Since $G$ is abelian, a cover 
in $\mathcal{\M}_{g,\gamma}(\B G)$ has automorphism group $G$. 
A cover in the image of $\rho$ only has automorphism group $\mathbb{Z}_a$. 
Thus, the degree of $\rho$ is $|K|^{2g-1}$.
\end{proof}

Although $\overline{\M}_{g,\phi^R(\gamma)}(\B \mathbb{Z}_a)$ may
 have several components, 
Lemma \ref{degreelemma} implies 
the degree of $\rho$ is the same over each component.  
In the nonabelian case, the situation is
much more complicated.
For example, let $\eta$ be the conjugacy class of a $3$-cycle in 
$\Sigma_3$, let
$$s:\Sigma_3\to\mathbb{Z}_2$$
be the sign representation, and let
$$\rho:\overline{\M}_{1,\eta}
(\B\Sigma_3)\to\overline{\M}_{1, 0}(\B \mathbb{Z}_2)$$
be the map induced by $s$.   
The space $\overline{\M}_{1, 0}(\B \mathbb{Z}_2)$ consists of 
two components: one with 
trivial monodromy, and one with nontrivial monodromy.  
There are covers in $\overline{\M}_{1,\eta}(\B\Sigma_3)$ lying above the 
nontrivial monodromy component. If
$t_1\neq t_2\in\Sigma_3$
are two transpositions, then $[t_1,t_2]$ is a 3-cycle.
On the other hand, there are no elements of 
$\overline{\M}_{1,\eta}(\B\Sigma_3)$ 
lying above the trivial monodromy component. All the monodromy in such a 
cover would lie in the abelian group 
$\mathbb{Z}_3=\ker(s)$, and there are no such covers with nontrivial monodromy about the one marked point by (\ref{nonempty}).  
As the formula in Theorem \ref{vvv} considers all components of 
$\overline{\M}_{g,\phi^R(\gamma)}(\B\mathbb{Z}_a)$ at once, 
a more nuanced approach would be required to understand 
Hurwitz-Hodge integrals for nonabelian groups, 
even for 1-dimensional representations.

In the disconnected case $\rho: \overline{\M}^\bullet
_{g,\gamma}(\B G) \rightarrow
\overline{\M}^\bullet_{g,\phi^R(\gamma)}(\B \mathbb{Z}_a)$,
Lemma \ref{degreelemma} has a few minor complications:
\begin{enumerate}
 \item[(i)] The monodromy condition $\sum_i \gamma_i=0\in G$ cannot be 
checked globally, but must be verified separately 
on each domain component.

\item[(ii)] The number of components matters. For disconnected curves 
with $h$ components, each of which satisfies the monodromy requirements, 
the degree of $\rho$ is $|K|^{2g-2+h}$.
\end{enumerate}
When $\rho$ is nonzero, the degree $|K|^{2g-2+h}$
 is independent of $G$ and the monodromy conditions (\ref{nonempty}). 
The only role these conditions play is to determine when the degree is nonzero.

\subsection{Wreath Hurwitz numbers}

The wreath product $K_d$ is defined by
$$
K_d=\{(k,\sigma)\ | \ k=(k_1,\dots, k_d)\in K^d, \sigma\in \Sigma_d\},
$$
$$
(k,\sigma)(k^\prime, \sigma^\prime)=(k+\sigma(k^\prime),\sigma \sigma^\prime).
$$
Conjugacy classes of $K_d$ are determined by their cycle types \cite{M}.
Since $K$ is abelian,
 for each $m$-cycle $(i_1i_2\cdots i_m)$ of $\sigma$, the element 
$k_{i_m}+k_{i_{m-1}}+\cdots +k_{i_1}$ is well-defined.
The resulting $\text{Conj}(K)$-wieghted partition of $d$ is the
called the {\em cycle type} of $(k,\sigma)$.
Two elements of $K_d$  are conjugate exactly 
when they have the same cycle type.

We index the conjugacy classes of $K_d$ by 
$\text{Conj}(K)$-weighted partitions of $d$.
Let
$$ \overline{\nu}=\{(\nu_1,\iota_1), \dots, (\nu_{\ell(\nu)}, \iota_{\ell(\mu)})\}, $$
$$ \overline{\mu}=\{(\mu_1,\kappa_1), \dots, (\mu_{\ell(\mu)}, \kappa_{\ell(\mu)})\} $$
be two such partitions.
Let $\nu^*$ be the partition with parts $\nu_j$ with a partial labelling
given by $\iota_j$. Then
$$\text{Aut}(\nu^*)= \text{Aut}(\overline{\nu}).$$ 
The Hurwitz number $H_g(\nu^*,\mu^*)$ counts cover with
the additional labelling data,
$$H_g(\nu^*,\mu^*) =   \frac{|\text{Aut}(\nu)|}{ |\text{Aut}(\nu^*)|}
\frac{|\text{Aut}(\mu)|}{ |\text{Aut}(\mu^*)|}
        H_g(\nu,\mu).$$

\begin{lemma} \label{alternatecount}
$H_{g, K}(\overline{\nu}, \overline{\mu})$ 
is the count of the covers 
$\pi:C\to \proj^1$ enumerated by $H_g(\nu^*, \mu^*)$
with multiplicity $m_\pi$.
The multiplicity $m_\pi$ is the 
 automorphism-weighted count of principal $K$-bundles on 
$C\setminus \pi^{-1}(\{0,\infty\})$ with monodromy $\iota_i$ at
 $p_i\in\pi^{-1}(0)$ and $\kappa_j$ at $q_j\in\pi^{-1}(\infty)$.
\end{lemma}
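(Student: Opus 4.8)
The plan is to compare the two counts through their monodromy descriptions, using the semidirect product structure $K_d=K^{d}\rtimes\Sigma_d$ to split the wreath count along the forgetful map $K_d\to\Sigma_d$. Fix the branch points $0,\infty,u_1,\dots,u_r$ of $\proj^1$, write $S=\proj^1\setminus\{0,\infty,u_1,\dots,u_r\}$ and $\Gamma=\pi_1(S)$, which is free on the standard peripheral loops $\gamma_0,\gamma_1,\dots,\gamma_r$, with $\gamma_\infty=(\gamma_0\gamma_1\cdots\gamma_r)^{-1}$. By the definition of $H_{g,K}$ together with the group-algebra formula for wreath Hurwitz numbers recalled in the introduction, $H_{g,K}(\overline{\nu},\overline{\mu})$ is $\tfrac1{|K_d|}$ times the number of homomorphisms $\psi\colon\Gamma\to K_d$ with $\psi(\gamma_0)$ of weighted cycle type $\overline{\nu}$, each $\psi(\gamma_i)$ of type $\overline{\tau}$, $\psi(\gamma_\infty)$ of type $\overline{\mu}$, and with transitive image in $\Sigma_d$. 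Each transitive $\bar\psi\colon\Gamma\to\Sigma_d$ with cycle types $\nu,\tau^{r},\mu$ is the monodromy of a connected degree-$d$ Hurwitz cover $\pi\colon C\to\proj^1$ counted by $H_g(\nu,\mu)$; recording which ramification point over $0$ (resp.\ $\infty$) is matched with which pair $(\nu_j,\iota_j)$ (resp.\ $(\mu_j,\kappa_j)$) refines this to $H_g(\nu^{*},\mu^{*})$.

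The core step is to fiber the $\psi$-count over the $\bar\psi$-count. A lift $\psi$ of a fixed $\bar\psi$ has the form $\psi(\gamma)=(c(\gamma),\bar\psi(\gamma))$, and $\psi$ is a homomorphism exactly when $c$ is a crossed homomorphism, $c\in Z^{1}(\Gamma,K^{d})$, for the action of $\Gamma$ on $K^{d}=\text{Map}(\{1,\dots,d\},K)$ through $\bar\psi$. Conjugating $\psi$ by an element $(v,1)\in K^{d}\subset K_d$ shifts $c$ by the coboundary of $v$, so $Z^{1}/B^{1}=H^{1}(\Gamma,K^{d})$; and since $\bar\psi$ is transitive, $K^{d}$ is the module coinduced from the index-$d$ subgroup $\Gamma_1=\bar\psi^{-1}(\text{Stab}(1))=\pi_1\big(C\setminus\pi^{-1}(\{0,\infty,u_1,\dots,u_r\})\big)$ with trivial coefficient group $K$, whence Shapiro's lemma gives $H^{1}(\Gamma,K^{d})\cong H^{1}(\Gamma_1,K)=\text{Hom}(\Gamma_1,K)$, the set of principal $K$-bundles on the punctured cover. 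I then check the two local translations: the $K$-weight that $\overline{\nu}$ attaches to the cycle of $\bar\psi(\gamma_0)$ corresponding to a point $p\in\pi^{-1}(0)$ is, by the formula for the $K^{d}$-component of a power of an element of $K_d$, the monodromy at $p$ of the bundle attached to $\psi$ (and similarly over $\infty$); and the requirement $\psi(\gamma_i)\in\overline{\tau}$, all of whose $K$-weights are $0$, says exactly that this bundle has trivial monodromy around every point of $\pi^{-1}(u_i)$, hence extends across $\pi^{-1}(\{u_1,\dots,u_r\})$ to a $K$-bundle on $C\setminus\pi^{-1}(\{0,\infty\})$, and conversely every such bundle so arises. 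So for fixed $\bar\psi$, the lifts with the prescribed cycle types form a disjoint union of $B^{1}(\Gamma,K^{d})$-cosets indexed by the $K$-bundles on $C\setminus\pi^{-1}(\{0,\infty\})$ with the monodromies prescribed by $\overline{\nu}$ and $\overline{\mu}$.

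Finally I assemble the normalizations. Transitivity gives $|B^{1}(\Gamma,K^{d})|=|K|^{d}/|K|=|K|^{d-1}$ and $|K_d|=|K|^{d}\,d!$, so $\tfrac1{|K_d|}\cdot|B^{1}|=\tfrac1{d!}\cdot\tfrac1{|K|}$; summing over $\bar\psi$ and grouping the $\Sigma_d$-monodromies by isomorphism class of the labelled cover turns the residual $\tfrac1{d!}$ into the $1/|\text{Aut}(\pi)|$ weighting defining $H_g(\nu^{*},\mu^{*})$, while $\tfrac1{|K|}$ times the number of $K$-bundles with the prescribed boundary monodromy is the automorphism-weighted count $m_\pi$ (a connected principal $K$-bundle has automorphism group $K$, and the same $\tfrac1{|K|}$ produces the groupoid count over the possibly disconnected bundles, in accordance with the convention that only the $d$-fold cover, not the $d|K|$-fold cover, is required connected). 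The step I expect to be the genuine obstacle is precisely this last bookkeeping: reconciling the $1/|K_d|$ and $|B^{1}|$ factors with the automorphism weights on both sides — in particular producing the relabelling factors $|\text{Aut}(\nu)|/|\text{Aut}(\nu^{*})|$ and $|\text{Aut}(\mu)|/|\text{Aut}(\mu^{*})|$ that separate $H_g(\nu^{*},\mu^{*})$ from $H_g(\nu,\mu)$ — and confirming that the Shapiro isomorphism is compatible with the local monodromy conditions on the nose; the topological inputs (the crossed-homomorphism description of lifts and the local monodromy dictionary) are routine by comparison.
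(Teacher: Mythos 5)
Your argument is correct, and it reaches the same underlying correspondence as the paper — a wreath cover is equivalent to a pair (Hurwitz cover $\pi$, principal $K$-bundle on the punctured cover with the prescribed local monodromies) — but by a genuinely different formal route. The paper works at the level of objects and geometrically: given a cover $\pi'\colon D\to\proj^1$ counted by $H_{g,K}(\overline{\nu},\overline{\mu})$, it takes the quotient of $D$ by the diagonal $K\subset K_d$ to produce $\pi\colon C\to\proj^1$, observes that the induced map $f\colon D\to C$ is a principal $K$-bundle away from the preimages of the branch points, reads off the local monodromies from the cycle-type weights exactly as in your ``local dictionary,'' and declares the construction reversible; it says nothing about how the $1/|\mathrm{Aut}|$ weights on the two sides match up. You instead work at the level of monodromy representations, fibering the homomorphism count $\Gamma\to K_d$ over $\Gamma\to\Sigma_d$ and identifying the fiber through $Z^1/B^1$ and Shapiro's lemma with $\mathrm{Hom}(\Gamma_1,K)$. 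What your approach buys is precisely the normalization the paper leaves implicit: the computation $\tfrac{1}{|K_d|}\cdot|B^1|=\tfrac{1}{d!}\cdot\tfrac{1}{|K|}$ accounts simultaneously for $\mathrm{Aut}(\pi)$ and for the automorphism group $K$ of each flat $K$-bundle, which is exactly the content of the phrase ``automorphism-weighted count'' in the statement. The one step you flag as a remaining obstacle — producing the relabelling factors that distinguish $H_g(\nu^*,\mu^*)$ from $H_g(\nu,\mu)$ — closes without difficulty: for a fixed transitive $\bar\psi$, the condition that $\psi(\gamma_0)$ lie in $C_{\overline{\nu}}$ forces a choice of matching between the cycles of $\bar\psi(\gamma_0)$ and the weighted parts of $\overline{\nu}$, and summing over these matchings is exactly summing over the partial labellings defining $\nu^*$ (likewise over $\infty$), so the residual $1/d!$ produces the $1/|\mathrm{Aut}(\pi)|$ weighting of the labelled count $H_g(\nu^*,\mu^*)$. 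The paper's proof is shorter and more transparent geometrically; yours is the one that actually verifies the groupoid cardinalities agree.
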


\begin{proof}
Let $\pi^\prime:D\to\mathbb{P}^1$ be a cover counted by 
$H_{g,K}(\overline{\nu},\overline{\mu})$. By definition,
$\pi'$ is a $d|K|$-fold cover of $\mathbb{P}^1$ with 
monodromies $\overline{\nu}, \overline{\mu}$ and $\overline{\tau}$ 
over $0,\infty$ and the points of $U_r$  respectively.

Each such cover has an associated cover $\pi: C\to\proj^1$ counted by 
$H_g(\nu^*, \mu^*)$.  Algebraically, the cover is obtained by the 
forgetful map from $K_d\to \Sigma_d$.
 Geometrically, the cover is obtained by taking the quotient of $D$ by 
the diagonal subgroup $K\subset K_d$.
There is a natural map $f:D\to C$.  Away from the preimages of $0,\infty$ and $U_r$, the map $f$ is a principal $K$-bundle.

Consider the point $p_i\in\pi^{-1}(0)$ corresponding to a cycle $\nu_i$ 
which is labelled with $\iota_i\in K$.  A small loop winding once around $p_i$ 
on $C$ has an image that winds $\nu_i$ times around $0$. 
 But we know that the monodromy for $\pi^\prime:D\to\mathbb{P}^1$ around 
$0$ is given by $\overline{\nu}$.  
By the definition of the cycle type,
the monodromy of $f$ around $p_i$ is $\iota_i$.  
An identical argument shows the monodromy at $q_i$ over $\infty$ is $\kappa_i$
 and the monodromy around all preimages of a point in $U_r$ is zero.

The above process is reversible. We start with
a $d$-fold cover $\pi^\prime:C\to \proj^1$ counted by 
$H_g(\nu^*, \mu^*)$ and a principal $K$-bundle 
$f:D\to C$ with monodromy $\iota_i$ around $p_i$ 
and $\kappa_i$ around $q_i$.
Then, the composition $\pi = \pi' \circ f$
is a cover counted by
$H_{g,K}(\overline{\nu},\overline{\mu})$.
\end{proof}

In other words, if
$\rho^\prime:\overline{\M}_{g,\iota\cup\kappa}(\B K)
\to\overline{\M}_{g,\ell(\lambda)+\ell(\mu)}$
is the natural map, then
$$H_{g,K}(\overline{\nu},\overline{\mu})
=\text{deg}(\rho^\prime) H_g(\nu^*,\mu^*).$$

\subsection{Proof of Theorem \ref{vvvvv}}
By Lemma \ref{pullbacklemma}, 
we can compute the integral in Theorem \ref{vvvvv} by computing 
the analogous Hurwitz-Hodge integral (appearing in Theorem \ref{vvv}) 
over $\overline{\M}_{g,-\mu}(\B \mathbb{Z}_a)$ 
and  multiplying by the degree of 
$$\rho:\overline{\M}_{g,-\overline{\mu}}(\B G)
\to\overline{\M}_{g,-\mu}(\B \mathbb{Z}_a).$$
On the other hand, by Lemma $\ref{alternatecount}$, we can 
calculate $H_{g,K}(\emptyset_+(k),\overline{\mu})$ 
by computing 
$H_{g}(\emptyset_+,\mu)$,
multiplying by the degree of 
$$\rho':\overline{\M}_{g,(-k)^{d/a}\cup\kappa}(\B K)\to 
\overline{\M}_{g,d/a+\ell(\mu)},$$
and correcting for the difference in the sizes of the automorphism
groups $\text{Aut}(\mu)$ and 
$$\text{Aut}(\overline{\mu})
=\text{Aut}(\mu^*).$$

Thus, to deduce Theorem \ref{vvvvv} from Theorem \ref{vvv}, 
we need only check that the degrees of $\rho$ and $\rho^\prime$ agree.  
By Lemma \ref{degreelemma}, the degrees agree when nonzero. The last step is
to
check the parity condition (\ref{nonempty}) is the same for 
$\rho$ and $\rho^\prime$.
For $\rho$, the parity condition is
$$0= \sum_{j=1}^\ell (-\overline{\mu})_j
=\sum_{j=1}^\ell ( \kappa_j-\mu_j x)=\sum_{j=1}^\ell \kappa_j-dx.$$
For $\rho^\prime$, the parity condition is
$$0=- \frac{d}{a}k+\sum_{j=1}^\ell \kappa_j.$$
Since $ax=k$, the conditions are equivalent. $\Box$

As in the faithful case, unstable integrals may appear on the right
side of the formula in Theorem \ref{vvvvv}.  These unstable terms are 
defined in a completely analogous manner, 
and extend Theorem \ref{vvvvv} to all contributions:

$$
\int_{\overline{\M}_{0,(0)}(\B G)}
\frac{
\sum_{i\geq 0} (-a)^i \lambda_i^{R}
}
{(1-x \bar{\psi}_1)} = \frac{1}{|G|} \cdot \frac{1}
{x^2},$$

$$
\int_{\overline{\M}_{0,(m,-m)}(\B G)}
\frac{
\sum_{i\geq 0} (-a)^i \lambda_i^{R}
}
{(1-x \bar{\psi}_1)(1-y\bar{\psi}_2)} = \frac{1}{|G|} \cdot \frac{1}
{x+y}.$$

Alternatively, using  a theory of stable maps relative to a stack divisor{\footnote{We avoid the foundational discussion of this theory.}}
at $\infty$, 
Theorem \ref{vvvvv} could be proven in a manner closely parallel to 
the proof of Theorem \ref{vvv}.


\begin{thebibliography}{123}
\bibitem{AGV} D. Abramovich, T. Graber, A. Vistoli, {\em Gromov-Witten theory for Deligne-Mumford stacks}, math/0603151.

\bibitem{AV} D. Abramovich and A. Vistoli, {\em Compactifying the space of stable maps}, JAMS {\bf 15} (2002), 27--75.

\bibitem{BGP} J. Bryan, T. Graber, and R. Pandharipande,
{\em The orbifold quantum cohomology of ${\mathbb{C}}^2/{\mathbb{Z}}_3$
and Hurwitz-Hodge integrals},
           J. Alg. Geom. {\bf 17} (2008), 1 -- 28.

\bibitem{C} C. Cadman, {\em Using stacks to impose tangency conditions on curves}, Amer. J. Math. {\bf 129} (2007), 405--427.


\bibitem{CR} W. Chen and Y. Ruan, {\em Orbifold Gromov-Witten theory} 
in {\em Orbifolds in mathematics and physics (Madison, WI, 2001)},
25--85, 
Contemp. Math. {\bf 310} (2002).


\bibitem{ELSV} T. Ekedahl, S. Lando, M. Shapiro, and A. Vainshtein, {\em Hurwitz numbers and intersections on moduli spaces of curves}, Invent. Math. 146 (2001), 297–-327.

\bibitem{FPH} C. Faber and R. Pandharipande, {\em Hodge integrals and
Gromov-Witten theory},  Invent. Math.  {\bf 139}  (2000), 173--199.


\bibitem{FPJ} C. Faber and R. Pandharipande,
{\em Relative maps and tautological
classes}, JEMS {\bf 7} (2005), 13--49.

\bibitem{FP} B. Fantechi, R. Pandharipande, 
{\em Stable maps and branch divisors}, Compositio Math. {\bf 130} (2002), 345--364.

\bibitem{GP} T. Graber and R. Pandharipande, {\em Localization of virtual classes}, 
Invent. Math. {\bf 135} (1999), 487--518.


\bibitem{GPC} T. Graber and R. Pandharipande, {\em Constructions of
nontautological classes on moduli spaces of curves}, Michigan Math J
{\bf 51} (2003), 93--109.

\bibitem{GV1} T. Graber and R. Vakil, {\em Hodge integrals and Hurwitz numbers
via virtual localization}, Compositio Math. {\bf 135} (2003), 25--36.

\bibitem{GV2} T. Graber and R. Vakil, {\em Relative virtual localization and vanishing of tautological classes on moduli spaces of curves}, Duke Math. J. 
{\bf 130} (2005), 1--37.

\bibitem{GJV} I.P. Goulden, D.M. Jackson, and R. Vakil, {\em Towards the
geometry of double Hurwitz numbers}, Advances in Mathematics {\bf 198} (2005),
43--92.



\bibitem{HM} J. Harris and D. Mumford, {\em On the Kodaira dimension
of the moduli space of curves}, Invent. Math. {\bf 67} (1982), 23--88. 




\bibitem{JK} T. Jarvis and T. Kimura, {\em Orbifold quantum cohomology of
the classifying space of a group} in {\em Orbifolds in mathematics and physics
(Madison, WI, 2001)}, 123--134, 
Contemp. Math. {\bf 310} (2002).

\bibitem{PJ} P. Johnson, in preparation.


\bibitem{KL}  M. Kazarian and S. Lando, {\em An algebro-geometric proof of
Witten's conjecture}, JAMS {\bf 20} (2007), 1079 -- 1089.


\bibitem{M} I. G. Macdonald, {\em Symmetric Functions and Hall Polynomials}, 2nd ed., Oxford
University Press, 1995.



\bibitem{Ok} A. Okounkov, {\em Toda equations for Hurwitz numbers},
Math. Res. Letters {\bf 7} (2000), 447-453.

\bibitem{OP1} A.~Okounkov and R.~Pandharipande, {\em Gromov-Witten theory,
Hurwitz numbers, and matrix models}, math/0102017.

\bibitem{OP2}  A. Okounkov and R. Pandharipande, {\em Gromov-Witten theory,
Hurwitz numbers, and completed cycles}, Ann. of Math {\bf{163}} (2006),
517 -- 560.

\bibitem{OP3} A. Okounkov and R. Pandharipande,
{\em The equivariant
Gromov-Witten theory of ${\mathbf P}^1$}, Ann. of Math {\bf{163}} (2006), 
561 -- 605.

\bibitem{P} R. Pandharipande, {\em The Toda equation and the Gromov-Witten
theory of the Riemann sphere}, Lett. Math. Phys. {\bf 53} (2000), 59 -- 74.


\bibitem{QW} Z. Qin and W. Wang, {\em Hilbert schemes of points on the minimal
resolution and soliton equation} in {\em Lie algebras, 
vertex operator algebras
and their applications}, Y.-Z. Huang and K. Misra (eds.), 
435--462,
Contemp. Math.
{\bf 442} (2007).

\end{thebibliography}
\end{document}